\title{On weakly sequentially complete Banach spaces.} 
\author{Ewa M. Bednarczuk$^{1, 2}$,  Krzysztof Le\'sniewski$^2$ \thanks{$1$ Systems Research Institute, Polish Academy of Sciences, ul.  Newelska 6, 01-447 Warsaw, Poland, $2$ Warsaw University of Technology, Faculty of Mathematics and Information Science, ul. Koszykowa 75,
 00-662 Warsaw, Poland}, 
(\email{email addresses: e.bednarczuk@mini.pw.edu.pl}, \email{k.lesniewski@mini.pw.edu.pl}). }
\begin{document}
\maketitle
\slugger{mms}{xxxx}{xx}{x}{x--x}

\begin{abstract}
We provide sufficient conditions for a Banach space $Y$ to be weakly sequentially complete. These conditions are expressed in terms of the existence of directional derivatives for cone convex mappings with values in $Y$.

\end{abstract}

\begin{keywords}
weakly sequentially complete Banach spaces, directional derivatives of cone convex mappings, cone convex mappings, directional derivatives
\end{keywords}

\begin{AMS}
26A51, 06A06, 46B45, 46B40
\end{AMS}

\pagestyle{myheadings}
\thispagestyle{plain}
\markboth{\sc On the existence of directional derivatives for cone-convex mappings}{\sc Ewa M. Bednarczuk and K. Le\'sniewski}

\section{Introduction} 
Weakly sequentially complete Banach spaces were introduced in \cite{Banach}. Since then,  properties of weakly sequentially complete spaces were investigated in a number of papers, see e.g. \cite{casini} and \cite{rosenthal} and the references therein. Important result  on weakly sequentially complete Banach spaces  is given by  Rosenthal in \cite{rosenthal}.
\begin{theorem}
If $Y$ is weakly sequentially complete, then $Y$ is either reflexive or contains a subspace isomorphic to $l^1.$
\end{theorem}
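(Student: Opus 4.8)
The plan is to prove the contrapositive in its natural form: assuming that $Y$ is weakly sequentially complete and that $Y$ contains no subspace isomorphic to $\ell^1$, I would deduce that $Y$ is reflexive. The whole argument rests on Rosenthal's celebrated $\ell^1$ dichotomy, which asserts that a Banach space contains no isomorphic copy of $\ell^1$ precisely when every bounded sequence admits a weakly Cauchy subsequence. Granting that dichotomy, the remaining work is to convert the existence of weakly Cauchy subsequences, together with weak sequential completeness, into reflexivity.

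First I would fix an arbitrary bounded sequence $(y_n)$ in $Y$. Since $Y$ contains no copy of $\ell^1$, Rosenthal's theorem furnishes a weakly Cauchy subsequence $(y_{n_k})$. Because $Y$ is weakly sequentially complete, this weakly Cauchy subsequence converges weakly to some $y \in Y$. Thus every bounded sequence in $Y$ has a weakly convergent subsequence, so the closed unit ball $B_Y$ is weakly sequentially compact; the weak limit lies in $B_Y$ because $B_Y$ is convex and norm-closed, hence weakly closed. Next I would invoke the Eberlein--\v{S}mulian theorem to upgrade weak sequential compactness of $B_Y$ to genuine weak compactness, and finally Kakutani's characterization of reflexivity, by which a Banach space with weakly compact closed unit ball is reflexive. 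Hence $Y$ is reflexive, completing the contrapositive.

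The main obstacle is not any individual step above --- each is a short application of a named theorem --- but rather the depth concealed in the very first tool: Rosenthal's $\ell^1$ theorem is itself a substantial result, proved through a delicate Ramsey-type combinatorial extraction involving trees of sets and Boolean-independent families. If a fully self-contained treatment were demanded, reproving that dichotomy would be the genuine heart of the matter; one would follow Rosenthal's original combinatorial argument for the real-scalar case and supplement it with Dor's refinement for complex scalars. Treating the dichotomy as a black box, the only minor points left to verify with care are that $B_Y$ is weakly closed and that the weak limit of the extracted subsequence indeed lands in $B_Y$, both of which are routine consequences of convexity together with norm-closedness.
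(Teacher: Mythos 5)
Your proposal is correct, but note that the paper itself offers no proof of this statement at all: it is quoted in the introduction as a known theorem of Rosenthal, with a bare citation to his 1974 paper, and nothing in the rest of the article depends on proving it. So the only meaningful comparison is with the standard literature argument, and yours is exactly that argument: pass to the contrapositive, invoke Rosenthal's $\ell^1$ dichotomy (no copy of $\ell^1$ implies every bounded sequence has a weakly Cauchy subsequence), use weak sequential completeness to turn weakly Cauchy subsequences into weakly convergent ones, observe that the limit stays in $B_Y$ because the ball is convex and norm-closed hence weakly closed, and then chain Eberlein--\v{S}mulian with Kakutani's characterization of reflexivity via weak compactness of $B_Y$. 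Each step is valid, and your honest flagging of where the real depth lies --- the combinatorial proof of the dichotomy itself --- is apt: what you have written is precisely how this statement is deduced as a corollary of the dichotomy in Rosenthal's own work, so treating the dichotomy as a black box is the natural division of labor. One small remark: since the paper works with real Banach spaces throughout, the appeal to Dor's complex-scalar refinement is not needed here, though it does no harm to mention it.
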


 In the present paper we prove sufficient conditions for a Banach space $Y$ to be weakly sequentially complete. These sufficient conditions are expressed in terms of existence of directional derivatives for cone convex mappings.

Cone convex mappings appear in variational analysis in topological vector spaces and in construction of efficient iterative schemes for solving vector optimization problems. Newton method for  solving smooth unconstrained vector optimization problems under partial order induced by general closed convex pointed cone can be found in \cite{drummond}.  

The organization of the paper is as follows. In Section 2 we discuss basic facts concerning weakly sequentially complete Banach spaces. In Section 3 we prove some properties of cone convex mappings. Section 4 is devoted to the constructions of convex functions that take values in given points. Section 5 contains the main result namely the proof of the fact that the existence of directional derivatives at any point and any direction for any cone convex mapping implies that the Banach space $Y$ is weakly sequentially complete.  In Section 6 we discuss properties of the mapping $F$ constructed in Theorem \ref{main}.

\section{Preliminary facts}

Let us present some known facts about weakly sequentially complete Banach spaces.

\begin{definition}
	A sequence $\{y_n\}$ in a Banach space $Y$ is weak Cauchy if $\lim\limits_{n \rightarrow \infty } y^*(y_n)$ exists for every $y^* \in Y^*.$ We say that a Banach space $Y$ is weakly sequentially complete if every weak Cauchy sequence weakly converges in $Y$. A weak Cauchy sequence $\{y_n\}$ is called nontrivial if it does not weakly converge.
	
\end{definition} 
Nontrivial weak Cauchy sequences were investigated by \cite{diestel, kadets,pelczynski}.
Any weak Cauchy sequence $\{y_n\}$ in a Banach space $Y$ is norm-bounded by the Uniform Boundedenss principle (\cite{albiac}, p. 38).
In \cite{conway} we can find a well know fact about reflexive space (\cite{conway} Corollary 4.4). 
\begin{proposition}[\cite{conway}]
	If $ Y$ is reflexive, then $Y$ is weakly sequentially complete.
\end{proposition}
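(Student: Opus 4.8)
The plan is to take an arbitrary weak Cauchy sequence $\{y_n\}$ in $Y$ and produce a weak limit by exploiting the identification of $Y$ with its bidual $Y^{**}$ afforded by reflexivity. First I would invoke the fact recalled above that every weak Cauchy sequence is norm-bounded by the Uniform Boundedness principle, and set $M := \sup_n \|y_n\| < \infty$.

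Next I would construct a candidate limit functional on the dual. For each $y^* \in Y^*$ the limit $\lim_{n\to\infty} y^*(y_n)$ exists by the very definition of a weak Cauchy sequence, so the assignment $\Lambda(y^*) := \lim_{n\to\infty} y^*(y_n)$ is well defined on all of $Y^*$. Linearity of $\Lambda$ follows from linearity of each evaluation $y^* \mapsto y^*(y_n)$ combined with linearity of the limit, and boundedness follows from the estimate $|\Lambda(y^*)| \le \sup_n |y^*(y_n)| \le M \|y^*\|$. Hence $\Lambda$ is a bounded linear functional on $Y^*$, that is, $\Lambda \in Y^{**}$.

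Then I would use reflexivity in a single decisive step: since the canonical embedding $J \colon Y \to Y^{**}$ is surjective, there exists $y \in Y$ with $J(y) = \Lambda$, which means $y^*(y) = \Lambda(y^*) = \lim_{n\to\infty} y^*(y_n)$ for every $y^* \in Y^*$. This is precisely the assertion that $y_n \to y$ weakly, so the weak Cauchy sequence $\{y_n\}$ weakly converges, and therefore $Y$ is weakly sequentially complete.

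The argument is essentially mechanical once the correct object $\Lambda$ is identified; the only point demanding genuine care is the verification that $\Lambda$ is bounded, and this is exactly where the norm-boundedness of $\{y_n\}$, and hence the Uniform Boundedness principle, is needed. I do not anticipate a serious obstacle beyond this bookkeeping, as reflexivity supplies the preimage $y$ directly.
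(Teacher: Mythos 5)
Your proof is correct: the paper itself offers no argument for this proposition, citing it as Corollary 4.4 of \cite{conway}, and your construction of $\Lambda \in Y^{**}$ from the weak Cauchy sequence followed by surjectivity of the canonical embedding is precisely the standard argument behind that citation. The one step needing care --- boundedness of $\Lambda$ via the Uniform Boundedness principle --- is handled correctly, and it matches the norm-boundedness fact the paper records in Section 2.
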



However, in some nonreflexive spaces, sequences can be found which are weak Cauchy but not weakly convergent.
Let us present some examples, c.f. \cite{conway,Pei}.

\newtheorem{Example}{Example}
\begin{Example}
\begin{itemize}
\item Let $Y=c_0$ and let us consider $y_n:=\sum\limits_{k=1}^n e_k$, where $e_k$ is basic vector in $c_0.$  We have $(c_0)^*=l^1$ and  $\{y_n\}$ is weak Cauchy sequence. Since $\{y_n\}$ converges $\mbox{weak}^*$ to the element $(1,1,\dots),$  it is not weakly convergent. 

	\item
	If we take $K$ which is a compact Hausdorff space, then $C(K)$ is weakly sequentially complete if and only if $K$ is finite.
	\item
	Space $C[0,1]$ of all continuous functions on $[0,1]$ is not weakly sequentially complete. 
	\item
	For every measure $\mu$, the space $L^1(\mu)$ is weakly sequentially complete.
\end{itemize}
\end{Example}

\begin{proposition}
	Any closed subspace $Z\subset Y$ of a weakly sequentially complete Banach space $Y$ is weakly sequentially complete Banach space.
\end{proposition}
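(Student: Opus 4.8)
The plan is to reduce the weak sequential completeness of $Z$ to that of the ambient space $Y$, exploiting the interplay between $Z^*$ and $Y^*$ furnished by the Hahn--Banach theorem. First I would fix an arbitrary weak Cauchy sequence $\{z_n\}$ in $Z$ and observe that it is automatically weak Cauchy when regarded as a sequence in $Y$. Indeed, every $y^* \in Y^*$ restricts to a functional $y^*|_Z \in Z^*$, and $y^*(z_n) = (y^*|_Z)(z_n)$, so $\lim_{n\to\infty} y^*(z_n)$ exists by the hypothesis that $\{z_n\}$ is weak Cauchy in $Z$. Thus $\{z_n\}$ is weak Cauchy in $Y$.

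Since $Y$ is weakly sequentially complete, $\{z_n\}$ then converges weakly in $Y$ to some $y \in Y$, meaning $y^*(z_n) \to y^*(y)$ for every $y^* \in Y^*$. The step I expect to require the most care is verifying that the limit $y$ lies in $Z$ and not merely in $Y$. For this I would invoke the fact that a norm-closed subspace is weakly closed: were $y \notin Z$, the Hahn--Banach separation theorem would supply some $y^* \in Y^*$ with $y^*|_Z \equiv 0$ yet $y^*(y) \neq 0$; but then $y^*(z_n) = 0$ for all $n$ would force $y^*(y) = \lim_{n\to\infty} y^*(z_n) = 0$, a contradiction. Hence $y \in Z$.

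Finally I would transfer weak convergence from $Y$ back to $Z$. Given $z^* \in Z^*$, the Hahn--Banach theorem yields an extension $y^* \in Y^*$ with $y^*|_Z = z^*$; since $y$ and all $z_n$ belong to $Z$, this gives $z^*(z_n) = y^*(z_n) \to y^*(y) = z^*(y)$, so $\{z_n\}$ converges weakly to $y$ in $Z$. As $\{z_n\}$ was an arbitrary weak Cauchy sequence in $Z$, the space $Z$ is weakly sequentially complete. The argument hinges on two applications of Hahn--Banach, namely extending functionals from $Z$ to $Y$ and separating the closed subspace from an external point, and the only genuine obstacle is ensuring that the limit object remains inside $Z$, which the separation argument settles.
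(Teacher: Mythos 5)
Your proof is correct, and its overall skeleton matches the paper's: regard the weak Cauchy sequence as a sequence in $Y$, invoke weak sequential completeness of $Y$ to get a weak limit $y$, and then show $y \in Z$. The one genuine difference is the tool used for that last containment. The paper cites Mazur's theorem: $Z$ is norm-closed and convex, hence weakly closed, so the weak limit of a sequence from $Z$ stays in $Z$. You instead run the Hahn--Banach separation argument directly for the subspace: if $y \notin Z$, pick $y^* \in Y^*$ annihilating $Z$ with $y^*(y) \neq 0$, and contradict $y^*(z_n) = 0 \to y^*(y)$. These are two faces of the same Hahn--Banach coin (Mazur's theorem is proved by exactly such a separation), so neither is more elementary in substance, but yours is self-contained where the paper's is a citation, and Mazur's statement is more general in that it handles arbitrary convex sets rather than subspaces. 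One point in your favor: you explicitly carry out the final transfer step, extending each $z^* \in Z^*$ to $Y^*$ to conclude that $z_n \rightharpoonup y$ in the weak topology of $Z$ itself; the paper stops at $y \in Z$ and leaves this (easy but necessary) verification tacit.
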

\begin{proof}
	Consider a weak Cauchy sequence $\{y_n\}$ in $Z$. Then it is weak Cauchy in $Y$. Since $Y$ is weakly sequentially complete,  $\{y_n\}$ weakly converges to some $y\in Y.$ Thus, $y$ lies in the weak closure of $Z$. Since $Z$ is closed and convex, by Mazur's theorem, weak closure of $Z$ coincides with $Z.$ Hence $y\in Z.$
\end{proof}

\begin{proposition}
	\label{podciag}
	Let $\{y_n\}\in Y$ be a nontrivial weak Cauchy sequence. Then each subsequence of $\{y_n\}$ is also nontrivial weak Cauchy.
\end{proposition}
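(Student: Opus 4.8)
The plan is to split the claim into two independent parts: first, that every subsequence of $\{y_n\}$ remains weak Cauchy, and second, that no such subsequence can weakly converge. The first part is formal and follows immediately from the definition; the second is where the word ``nontrivial'' does its work, and I would handle it by contradiction.

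For the first part I would argue directly from the definition of weak Cauchyness. Fix an arbitrary subsequence $\{y_{n_k}\}$ and any functional $y^* \in Y^*$. Since $\{y_n\}$ is weak Cauchy, the scalar sequence $\{y^*(y_n)\}$ converges, and therefore its subsequence $\{y^*(y_{n_k})\}$ converges to the same limit. As $y^*$ was arbitrary, $\{y_{n_k}\}$ is weak Cauchy.

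For the second part I would proceed by contradiction. Suppose some subsequence $\{y_{n_k}\}$ weakly converges to a point $y \in Y$, that is, $y^*(y_{n_k}) \to y^*(y)$ for every $y^* \in Y^*$. Write $\ell(y^*) := \lim_{n} y^*(y_n)$ for the scalar limit guaranteed by weak Cauchyness of the full sequence. Since a convergent scalar sequence and each of its subsequences share the same limit, we obtain $\ell(y^*) = \lim_{k} y^*(y_{n_k}) = y^*(y)$ for every $y^*$. But this says precisely that $y^*(y_n) \to y^*(y)$ for every $y^* \in Y^*$, i.e. the whole sequence $\{y_n\}$ weakly converges to $y$, contradicting the assumption that $\{y_n\}$ is nontrivial. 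Hence no subsequence weakly converges, and combined with the first part each subsequence is nontrivial weak Cauchy.

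The argument is essentially a uniqueness-of-limit observation, so there is no substantive obstacle; the only point requiring a little care is to notice that weak convergence of the subsequence pins down the value $y^*(y)$ as the common scalar limit $\ell(y^*)$, which then propagates back to the entire sequence. No appeal to completeness or to norm-boundedness is strictly needed, although the uniform boundedness remark preceding the proposition ensures that $\ell$ is a well-defined bounded linear functional on $Y^*$ should one prefer to phrase the contradiction through the canonical element it determines in $Y^{**}$.
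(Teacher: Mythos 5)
Your proof is correct and follows essentially the same route as the paper: assume a subsequence converges weakly, deduce that the full sequence converges weakly to the same limit, and contradict nontriviality. The only difference is cosmetic --- where the paper writes out an explicit $\varepsilon/2$ triangle-inequality argument, you invoke the fact that a convergent scalar sequence shares its limit with every subsequence, which is a cleaner packaging of the same observation.
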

\begin{proof}
	By contradiction, assume that there exists a subsequence $\{y_{n_k}\} \subset \{y_m\}$ weakly converging to  $y_0 \in Y.$ Let $\varepsilon >0$. There exists $N_1\in \textbf{N}$ such that for all $k>N_1$ $|f^*(y_{n_k})-f^*(y_0)|\le \frac{\varepsilon}{2}.$
	Since $\{y_n\}$ is weak Cauchy,  there exists $N\in \textbf{N}$ such that  $|f^*(y_{n})- f^*(y_{n_k})|\le \frac{\varepsilon}{2}$ for all $n>N.$ Since $k, n> \mbox{max}\{N_1,N\}$ we have
	$$
	\begin{array}{l}
	|f^*(y_{n})- f^*(y_{0})|=|f^*(y_{n})- f^*(y_{n_k})+ f^*(y_{n_k})-f^*(y_0)|\le
	\\|f^*(y_{n})- f^*(y_{n_k})|+ |f^*(y_{n_k})-f^*(y_0)|\le \frac{\varepsilon}{2}+\frac{\varepsilon}{2}=\varepsilon.
	\end{array}
	$$\end{proof}



In the present paper we consider cone ordering relations in $Y$ generated by  convex closed cones. If we  assume that $Y$ is a Banach lattice, then we get the following characterization (\cite{lindenstrauss}).

\begin{theorem}[\cite{lindenstrauss}]
	If $Y$ is a Banach lattice, then $Y$ is weakly sequentially complete if and only if it does not contain any  subspace isomorphic to $c_0$. 
\end{theorem}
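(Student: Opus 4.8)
The plan is to prove the two implications separately; the forward one is elementary and uses no lattice structure, while the converse is the substantive part. For ``weakly sequentially complete $\Rightarrow$ no subspace isomorphic to $c_0$'' I would argue by contraposition using only facts already recorded above. The example of $c_0$ discussed above shows that $c_0$ is not weakly sequentially complete, since the partial sums $\sum_{k=1}^n e_k$ form a weak Cauchy sequence with no weak limit. If $Y$ contained a subspace $Z$ isomorphic to $c_0$, then $Z$ would be complete, hence closed in $Y$, and by the proposition stating that closed subspaces of weakly sequentially complete spaces are again weakly sequentially complete, $Z$ would inherit that property. Since a linear homeomorphism carries weak Cauchy sequences to weak Cauchy sequences and weak limits to weak limits, weak sequential completeness is an isomorphic invariant; hence $c_0$ would be weakly sequentially complete, a contradiction.

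For the converse, assume $Y$ is a Banach lattice containing no copy of $c_0$ and let $\{x_n\}$ be a weak Cauchy sequence, necessarily norm bounded by the Uniform Boundedness principle. By Proposition~\ref{podciag} it suffices to exhibit a single weakly convergent subsequence, because the scalar limits $\lim_n y^*(x_n)$ already exist for every $y^*\in Y^*$, so any subsequential weak limit is the weak limit of the whole sequence. The first step is to show that the absence of $c_0$ forces $Y$ to be a KB-space, meaning that every norm-bounded increasing sequence $0\le z_1\le z_2\le\cdots$ converges in norm. If some such sequence were not norm Cauchy, I would select blocks $u_k=z_{m_k}-z_{n_k}\ge 0$ along disjoint index ranges with $\|u_k\|\ge\varepsilon$; monotonicity gives $\sum_{k=1}^K u_k\le z_{m_K}$, so the partial sums stay bounded, and since $|\sum_{k\in F}\varepsilon_k u_k|\le\sum_{k\in F}u_k$ in the lattice, the series $\sum_k u_k$ is weakly unconditionally Cauchy. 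Its terms do not tend to zero, so it cannot converge unconditionally, and the Bessaga--Pe\l czy\'nski theorem (see \cite{albiac}) would then yield a subsequence of $(u_k)$ equivalent to the unit vector basis of $c_0$, contradicting the hypothesis.

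It remains to deduce weak sequential completeness from the KB property, and this is where I expect the main obstacle. For a positive increasing bounded sequence the KB property immediately yields norm, hence weak, convergence, so the real task is to reduce an arbitrary weak Cauchy sequence to this monotone situation. The difficulty is that the lattice operations are not weakly continuous, so one cannot simply replace $\{x_n\}$ by its positive parts or by partial suprema. The route I would follow is to pass to the bidual, where the scalar limits define $\phi\in Y^{**}$ with $\hat{x}_n\to\phi$ in the $\mbox{weak}^*$ topology, so that the goal becomes to show $\phi\in Y$; here the relevant structural fact is that a KB-space is exactly a band in its second dual, giving a decomposition $Y^{**}=Y\oplus Y^{d}$ into which $\phi$ can be split. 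Showing that the component of $\phi$ in the disjoint complement $Y^{d}$ vanishes, equivalently carrying out the disjointification of $\{x_n\}$ and controlling the disjoint remainder by the KB hypothesis, is the technical heart of the argument, and I would rely on the Banach-lattice machinery developed in \cite{lindenstrauss} to complete it.
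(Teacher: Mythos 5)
You should first note a point of reference: the paper does not prove this theorem at all --- it is stated as a quotation from \cite{lindenstrauss}. So your attempt can only be judged against the classical argument in that reference, not against anything in this paper.

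Your forward implication is complete and correct, and it is economical in that it uses only ingredients already present in the paper: the example showing $c_0$ is not weakly sequentially complete, the proposition that closed subspaces inherit weak sequential completeness, and the (easily justified, via adjoints) fact that weak sequential completeness is an isomorphic invariant. Your first reduction in the converse is also sound: the disjoint blocks $u_k\ge 0$ of a non-Cauchy increasing bounded sequence have order-bounded partial sums, hence $\sum_k u_k$ is weakly unconditionally Cauchy (to see this one should decompose $y^*=y^*_+-y^*_-$ in the dual lattice, a detail worth writing out), and the Bessaga--Pe\l czy\'nski $c_0$-theorem then produces a copy of $c_0$; so absence of $c_0$ makes $Y$ a KB-space.

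The genuine gap is the final step, KB-space $\Rightarrow$ weakly sequentially complete, which you explicitly outsource to ``the Banach-lattice machinery developed in \cite{lindenstrauss}''. That machinery \emph{is} the proof of the present theorem, so as written your argument proves only the comparatively easy equivalence of ``no copy of $c_0$'' with the KB property and assumes the substantive half. Moreover, the route you sketch does not close by itself: knowing that $Y$ is a band in $Y^{**}$, with band decomposition $Y^{**}=Y\oplus Y^{d}$, cannot directly locate the weak$^*$ limit $\phi$ of $\{\hat{x}_n\}$ inside $Y$, because $Y$ is weak$^*$ dense in $Y^{**}$ --- band position alone says nothing about weak$^*$ limits of sequences from $Y$. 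One needs an argument specific to weak Cauchy sequences showing that the $Y^{d}$-component of $\phi$ vanishes, and the standard proofs obtain this by a reduction you have not carried out: pass to the band generated by $\{x_n\}$ in $Y$, which has a weak unit and, by the KB property, order continuous norm; represent that band as an order ideal in some $L^1(\mu)$; and then combine the weak sequential completeness of $L^1(\mu)$ (quoted in the paper's examples) with the ideal/band position to conclude $\phi\in Y$. Until you supply this reduction, or an equivalent disjointification argument with actual estimates, the converse implication --- the heart of the theorem --- remains unproved.
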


Let us underline that if we consider the concept of weak completeness in terms of nets  (see \cite{engelking} for the definition), then we cannot expect weak completeness in the infinite dimensional case. 
A net $S:=\{x_\sigma, \sigma \in \sum\}$ in a topological vector space $Y$ is  Cauchy if for every neighbourhood $U$ of zero there exists $\sigma_0 \in \sum $ such that $x_{\sigma_1}-x_{\sigma_2} \in U$ for every $\sigma_1, \sigma_2 \ge \sigma_0.$
Topological vector space $Y$ is complete if each Cauchy net converges (see \cite{joshi} p.356). 


First countable spaces, including metric spaces, have topologies that are determined by their convergent sequences. In an arbitrary first countable space, a point $x$ is in the closure of a subset $A$ if and only if there is a sequence in $A$ converging to $x.$

If $Y$ is an infinite dimensional Banach space, regardless of whether or not it is separable in the norm topology, then the weak topology on $Y$ is not first countable, and is not characterized by its convergent sequences alone.

For  the weak topology in an infinite-dimensional space, there exist weak Cauchy nets which are not weakly convergent. This result can be found in \cite{costara,diestel} and is deeply tied with Axiom of choice and Helly's Theorem.
\begin{proposition}[\cite{diestel} p.14]
The weak topology on infinite-dimensional normed space is never complete.
\end{proposition}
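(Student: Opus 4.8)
The plan is to exhibit a single weak Cauchy net in $Y$ that fails to converge weakly; by the definition of completeness for nets recalled just above, this already shows that $(Y, \sigma(Y, Y^*))$ is not complete. The whole construction rests on comparing the canonical embedding $J : Y \to (Y^*)'$ of $Y$ into the \emph{algebraic} dual of $Y^*$, given by $J(y)(y^*) = y^*(y)$, with the entire space $(Y^*)'$. Since $Y$ is infinite dimensional, so is $Y^*$, and every infinite-dimensional normed space carries a discontinuous linear functional (choose a Hamel basis and prescribe unbounded values on it — this is the step that invokes the Axiom of Choice). Such a functional $\Phi : Y^* \to \mathbf{R}$ cannot lie in the image $J(Y)$, because every $J(y)$ is norm continuous on $Y^*$ with $\|J(y)\| = \|y\|$. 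I would fix one such $\Phi \in (Y^*)' \setminus J(Y)$.

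Next I would index a net by the family $\Sigma$ of all finite-dimensional subspaces $F \subseteq Y^*$, directed by inclusion. For each such $F$ pick a basis $f_1, \dots, f_n$ and invoke the elementary realization lemma — the finite-dimensional form of Helly's theorem — stating that the evaluation map $x \mapsto (f_1(x), \dots, f_n(x))$ from $Y$ onto $\mathbf{R}^n$ is surjective whenever $f_1, \dots, f_n$ are linearly independent. Hence there is $x_F \in Y$ with $f_i(x_F) = \Phi(f_i)$ for all $i$, and by linearity $y^*(x_F) = \Phi(y^*)$ for every $y^* \in F$. This defines the net $\{x_F\}_{F \in \Sigma}$ (note that, in contrast with weak Cauchy \emph{sequences}, this net may well be unbounded, which is precisely the net-versus-sequence phenomenon stressed earlier).

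Then I would verify the two required properties. For the weak Cauchy property, take a basic weak neighbourhood of zero $U = \{x : |g_j(x)| < \varepsilon, \ j = 1, \dots, m\}$ and put $F_0 = \mathrm{span}\{g_1, \dots, g_m\}$; for any $F_1, F_2 \supseteq F_0$ one gets $g_j(x_{F_1} - x_{F_2}) = \Phi(g_j) - \Phi(g_j) = 0$, so $x_{F_1} - x_{F_2} \in U$, which is exactly the Cauchy condition for nets. For non-convergence, suppose $\{x_F\}$ converged weakly to some $y_0 \in Y$; then for every $y^* \in Y^*$ we would have $y^*(y_0) = \lim_F y^*(x_F) = \Phi(y^*)$, the limit being eventually constant once $F \ni y^*$. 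This says $\Phi = J(y_0) \in J(Y)$, contradicting the choice of $\Phi$.

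The main obstacle, and the conceptual core of the statement, is the very first step: producing a linear functional on $Y^*$ that is not represented by a point of $Y$. In finite dimensions no such functional exists, which is exactly why the failure of completeness is an infinite-dimensional phenomenon; securing $\Phi$ relies on the Axiom of Choice through a Hamel basis of $Y^*$, matching the remark that the result is \emph{deeply tied with the Axiom of choice and Helly's Theorem}. Everything after that is bookkeeping: the realization of $\Phi$ on finite-dimensional pieces via Helly, and the routine checking of the two net properties.
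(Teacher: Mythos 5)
Your proof is correct. The paper gives no proof of this proposition (it is cited verbatim from Diestel, p.~14), but your argument is exactly the standard one that citation refers to, built from precisely the two ingredients the paper's preceding remark names: the Axiom of Choice, used through a Hamel basis to produce a discontinuous linear functional $\Phi$ on $Y^*$ that no point of $Y$ can represent, and Helly's realization lemma, used to construct the non-convergent weak Cauchy net $\{x_F\}$ indexed by the finite-dimensional subspaces of $Y^*$.
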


\section{Cone-convex mappings}

Let $X$, $Y$ be real linear vector  spaces 
and let $K\subset Y$ be a convex cone inducing the standard ordering relation
$$
x \le_K y  \ \ \Leftrightarrow  \ \  y-x \in K \ \ \mbox{ for\  any }\ \  x,y \in Y.
$$
Analogously,  we write $y\ge_K x$ if and only if $y-x\in K.$ We use the notation $\ge$ if $K$ is clear from the context.

Let  $F: X \rightarrow Y$. We say that the mapping $F$ is $K$-convex  on a convex set $A \subset X$ if 
\begin{equation}
\label{kwypuklosc}
F(\lambda x + (1-\lambda )y) \le_K \lambda F(x) + (1-\lambda ) F(y) \ \ \mbox{ for all } x,y \in A \ \ \mbox{and } \lambda \in [0,1].
\end{equation}

As in the scalar case \cite{zalinescu} we have the following characterization of cone convexity of $F$ in terms of the epigraph of $F$, $\mbox{epi} F$, defined as $\mbox{epi} F:=\{(x,y)\in A \times Y:\ y\ge_K F(x)\}$.
\begin{proposition}
Let $K\subset Y$ be a closed convex cone.
A mapping $F: X \rightarrow Y$ is $K$-convex on $A\subset X$ if and only if $\mbox{epi} F$ is a convex set in $A\times Y$.
\end{proposition}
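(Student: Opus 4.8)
The plan is to prove the two implications separately, translating the order relation $\le_K$ into membership in $K$ and exploiting the two defining algebraic properties of a convex cone, namely $K+K\subseteq K$ and $tK\subseteq K$ for $t\ge 0$ (which together follow from convexity and the cone property), together with $0\in K$ (which holds because $\le_K$ is reflexive, equivalently because the ``diagonal'' points $(x,F(x))$ must lie in $\mbox{epi} F$). I note in advance that closedness of $K$ plays no role here; the equivalence is purely algebraic, so the hypothesis that $K$ is closed is not actually needed for this particular statement.

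For the forward implication, assume $F$ is $K$-convex and pick $(x_1,y_1),(x_2,y_2)\in\mbox{epi} F$ together with $\lambda\in[0,1]$. Since $A$ is convex, the first coordinate $\lambda x_1+(1-\lambda)x_2$ lies in $A$, so it remains only to check the order condition on the second coordinate. The key step is the decomposition
\[
\lambda y_1+(1-\lambda)y_2-F(\lambda x_1+(1-\lambda)x_2)
=\big[\lambda(y_1-F(x_1))+(1-\lambda)(y_2-F(x_2))\big]+\big[\lambda F(x_1)+(1-\lambda)F(x_2)-F(\lambda x_1+(1-\lambda)x_2)\big].
\]
The first bracket is a nonnegative combination of the elements $y_i-F(x_i)\in K$, hence lies in $K$; the second bracket lies in $K$ precisely by the $K$-convexity inequality \eqref{kwypuklosc}. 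As $K+K\subseteq K$, the whole expression lies in $K$, which is exactly the statement that $\lambda(x_1,y_1)+(1-\lambda)(x_2,y_2)\in\mbox{epi} F$.

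For the converse, assume $\mbox{epi} F$ is convex. Because $0\in K$ we have $(x,F(x))\in\mbox{epi} F$ for every $x\in A$. Given $x_1,x_2\in A$ and $\lambda\in[0,1]$, convexity of $\mbox{epi} F$ applied to the points $(x_1,F(x_1))$ and $(x_2,F(x_2))$ yields $(\lambda x_1+(1-\lambda)x_2,\ \lambda F(x_1)+(1-\lambda)F(x_2))\in\mbox{epi} F$, which unwinds to $\lambda F(x_1)+(1-\lambda)F(x_2)-F(\lambda x_1+(1-\lambda)x_2)\in K$, that is, inequality \eqref{kwypuklosc}.

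There is no genuine obstacle; the only points requiring care are the bookkeeping of the cone arithmetic in the forward direction — making sure that the scalings by $\lambda$ and $1-\lambda$ and the subsequent addition all remain inside $K$ — and the observation that $0\in K$ is what makes the graph points $(x,F(x))$ available as input to convexity in the reverse direction.
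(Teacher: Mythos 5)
Your proof is correct and takes essentially the same route as the paper's: your two-bracket decomposition in the forward direction is just the additive form of the paper's chained inequalities $F(\lambda x_1+(1-\lambda)x_2)\le_K \lambda F(x_1)+(1-\lambda)F(x_2)\le_K \lambda y_1+(1-\lambda)y_2$ (transitivity of $\le_K$ being exactly $K+K\subseteq K$, i.e.\ convexity of the cone), and your converse merely spells out what the paper dismisses as following directly from the definition of $\mbox{epi} F$. Your side observation that closedness of $K$ is never used is also accurate, provided one adopts the convention that a convex cone satisfies $tK\subseteq K$ for all $t\ge 0$ so that $0\in K$; otherwise closedness is what guarantees $0\in K$ and hence that the points $(x,F(x))$ lie in $\mbox{epi} F$.
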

\begin{proof}
Let us assume that $F$ is $K$-convex on $A$. Let $(x_1,y_1), (x_1,y_1)\in \mbox{epi} F$ and $\lambda \in [0,1].$ From the definition of  $\mbox{epi} F$ we have $y_1- F(x_1)=k_1$ and $y_2- F(x_2)=k_2$ for some $k_1, k_2 \in K.$ Since $K$ is convex,
$
\lambda k_1 + (1-\lambda)k_2 \in K
.$
From $K$-convexity of $F$ on $A$ we get 
$$
F(\lambda x_1 +(1-\lambda) x_2)\le_K \lambda F(x_1) + (1-\lambda) F(x_2)\le_K \lambda y_1+ (1-\lambda )y_2.
$$
The converse implication follows directly from the definition of $\mbox{epi} F.$
\end{proof}

Let $Y$ be a Banach space with the dual space $Y^*.$ Let $K$ be a closed convex cone in $Y.$  Dual cone $K^*$ of $K$ is defined as $K^*:=\{ y^* \in Y^* \ :\ y^*(y) \ge 0 \ \forall y\in K\}$. 

\begin{proposition}[\cite{jahn}, Lemma 3.21]
Let $K\subset Y$ be a closed convex cone in $Y.$
The cone $K$ can be represented as 
\begin{equation}
\label{biduall}
K=\{y\in Y\ : \ y^*(y)\ge 0\ \forall y^*\in K^*\}.
\end{equation}
\end{proposition}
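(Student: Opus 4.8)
The plan is to establish the two inclusions separately, writing $\widehat{K}:=\{y\in Y\ :\ y^*(y)\ge 0\ \forall y^*\in K^*\}$ for the set on the right-hand side of \eqref{biduall}. The inclusion $K\subseteq\widehat{K}$ is immediate from the definitions and uses no topological hypotheses: if $y\in K$, then by the definition of the dual cone $K^*$ every $y^*\in K^*$ satisfies $y^*(y)\ge 0$, so $y\in\widehat{K}$. Note that this direction needs neither closedness nor convexity of $K$.

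The reverse inclusion $\widehat{K}\subseteq K$ is the substantial part, and I would prove its contrapositive with the Hahn--Banach separation theorem. Suppose $y_0\notin K$. Since $K$ is closed and convex and the singleton $\{y_0\}$ is compact, convex, and disjoint from $K$, strict separation produces a functional $y^*\in Y^*$ such that, after replacing $y^*$ by $-y^*$ if necessary, $y^*(y_0)<m$, where $m:=\inf_{y\in K}y^*(y)$. The quantity $m$ is a genuine real number because it is bounded below by $y^*(y_0)$.

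The crux is to upgrade this finite lower bound to the sharp value $m=0$ by exploiting the conical structure of $K$. For any $y\in K$ and any $t>0$ we have $ty\in K$, hence $t\,y^*(y)=y^*(ty)\ge m$ for all $t>0$; letting $t\to\infty$ rules out $y^*(y)<0$, so $y^*(y)\ge 0$ for every $y\in K$ and therefore $y^*\in K^*$. On the other hand $0\in K$ forces $m\le y^*(0)=0$, whence $y^*(y_0)<m\le 0$. Thus we have exhibited a functional $y^*\in K^*$ with $y^*(y_0)<0$, so $y_0\notin\widehat{K}$. Taking the contrapositive yields $\widehat{K}\subseteq K$, which together with the first inclusion proves \eqref{biduall}.

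I expect the only real obstacle to be the correct invocation of the separation theorem (ensuring \emph{strict} separation of the point from the closed convex cone) and the conical homogeneity argument that converts a finite lower bound into membership in the dual cone. It is precisely this last step where the closedness of $K$ alone is not enough and the cone property is essential.
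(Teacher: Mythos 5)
Your proof is correct and complete: the easy inclusion from the definition of $K^*$, plus Hahn--Banach strict separation of a point from the closed convex cone, with the homogeneity argument forcing the separating functional into $K^*$ and the bound down to $0$. The paper itself offers no proof of this proposition---it is quoted from Jahn's book (Lemma 3.21)---and your argument is precisely the standard separation proof underlying that cited result, so there is nothing to reconcile between the two.
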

The following lemma provides another characterization of $K$-convex mappings. The finite-dimensional case of this lemma has been used in \cite{pennanen}.

\begin{lemma}
	\label{lemma-convex}
	 Let $A\subset X$ be a convex subset of $X$. Let $K\subset Y$ be a  closed convex  cone  and let  $F:X\rightarrow Y$ be a mapping. The following conditions are equivalent.
\begin{enumerate}
\item
$
\mbox{The mapping }F \mbox{ is $K$-convex on }A.$
\item $\mbox{For any } u^{*}\in K^{*} \mbox{, the composite function }  u^{*}(F):A\rightarrow\textbf{R}
\mbox{ is convex}$.
\end{enumerate}
\end{lemma}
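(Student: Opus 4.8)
The plan is to prove the two implications separately, exploiting the fact that each $u^* \in K^*$ is a linear functional that is nonnegative on $K$, together with the bidual-type representation \eqref{biduall} of the cone $K$ as the intersection of the half-spaces cut out by its dual cone. The forward implication is a direct computation using only the definition of $K^*$, while the reverse implication is where the representation \eqref{biduall} does the real work, so I would organize the argument so that \eqref{biduall} is invoked exactly once, in the $(2)\Rightarrow(1)$ step.

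First I would prove $(1)\Rightarrow(2)$. Assuming $F$ is $K$-convex on $A$, fix $u^* \in K^*$, points $x,y \in A$, and $\lambda \in [0,1]$. The defining inequality \eqref{kwypuklosc} says precisely that the vector $\lambda F(x) + (1-\lambda)F(y) - F(\lambda x + (1-\lambda)y)$ lies in $K$. Since $u^* \in K^*$ is nonnegative on every element of $K$, applying $u^*$ and using its linearity yields
\[
u^*\bigl(F(\lambda x + (1-\lambda)y)\bigr) \le \lambda\, u^*(F(x)) + (1-\lambda)\, u^*(F(y)),
\]
which is exactly the convexity of the real-valued function $u^*(F)$ on $A$. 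No further input is needed here.

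For $(2)\Rightarrow(1)$ I would reverse this chain. Assume $u^*(F)$ is convex for every $u^* \in K^*$. Fix $x,y \in A$ and $\lambda \in [0,1]$, and set $v := \lambda F(x) + (1-\lambda)F(y) - F(\lambda x + (1-\lambda)y)$. Convexity of $u^*(F)$ together with linearity of $u^*$ gives $u^*(v) \ge 0$ for \emph{every} $u^* \in K^*$. The key step is then to conclude $v \in K$: this is not automatic from $v$ being nonnegative on the dual cone unless $K$ is closed and convex, and it is precisely here that the representation \eqref{biduall}, namely $K = \{y \in Y : y^*(y) \ge 0\ \forall y^* \in K^*\}$, is applied to deduce $v \in K$. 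Unwinding the definition of $v$, this is exactly \eqref{kwypuklosc}, so $F$ is $K$-convex on $A$.

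The main obstacle, and the only nontrivial ingredient, is the reverse passage from ``$u^*(v)\ge 0$ for all $u^*\in K^*$'' to ``$v\in K$''; this is where closedness and convexity of $K$ are essential, and it is supplied cleanly by \eqref{biduall} (a Hahn--Banach separation result). Everything else is bookkeeping with linearity, so I would keep the write-up short and let \eqref{biduall} carry that single step.
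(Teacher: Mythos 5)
Your proposal is correct and follows essentially the same route as the paper: the forward implication is the direct computation from the definition of $K^*$, and the reverse implication hinges on the representation \eqref{biduall} of $K$ via its dual cone. The only cosmetic difference is that you argue the reverse implication directly (all $u^*(v)\ge 0$ implies $v\in K$), whereas the paper phrases the same step as a proof by contradiction (if $v\notin K$ then some $u^*\in K^*$ gives $u^*(v)<0$); these are the same argument in contrapositive form.
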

\begin{proof}
The implication $\Rightarrow $  follows directly  from the definition of $K^*.$

We prove the converse implication by contradiction. Suppose that 
for some $\lambda \in [0,1],$ $x,y\in A$ we have
 $ \lambda F(x) + (1-\lambda ) F(y) - F(\lambda x + (1-\lambda )y)\notin K.$ By (\ref{biduall}),  there exists $u^*\in K^*$ such that $u^*(\lambda F(x) + (1-\lambda ) F(y)-F(\lambda x + (1-\lambda )y))<0$ which, by Proposition \ref{biduall},  contradicts the convexity of $u^*(F)$ on $A$.
 \end{proof}

\section{Construction of Convex Functions}
In this section we construct convex function which take given  values at a countable number of points.
Some constructions of convex functions are present in the literature. For example, in \cite{konderla} we can find the proof that a continuous convex function $f$ defined on $l_p$, $p\ge1$,
$$
f(x):=\|(|x_1|^{1+1}, |x_2|^{1+1/2},...)\|_p,
$$
is everywhere compactly differentiable but not Fr\'echet differentiable at zero.
In \cite{bilek} we can find an extension of a function $\Phi : X \rightarrow \bar{\textbf{R}}$ to some convex function $\eta : C \rightarrow \bar{\textbf{R}}$, $X\subset C$, $C$ is a convex set,  such that $\eta (x) = \Phi(x)$ for all $x\in X.$

Let us start with the following proposition.

\begin{proposition}
\label{konstrukcja1}
For every nonnegative decreasing and convergent sequence $\{a_m\}\subset \textbf{R}$ there exists a subsequence $\{a_{m_k}\}\subset \{a_m\},$ $k\in \textbf{N}$, and a convex nonincreasing
  function $g:\textbf{R}\rightarrow \textbf{R}$ such that $g(m_k)=a_{m_k}.$
\end{proposition}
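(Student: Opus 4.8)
The plan is to pass to the limit $L := \lim_{m} a_m \ge 0$, set $b_m := a_m - L \ge 0$, and search for a subsequence whose points $(m_k, a_{m_k})$ lie on the graph of a convex nonincreasing piecewise linear function. Passing to a subsequence is essential: the piecewise linear interpolant of the \emph{full} family $\{(m,a_m)\}$ is convex only when the increments $a_m - a_{m+1}$ are themselves nonincreasing in $m$, which an arbitrary decreasing sequence need not satisfy. Hence the actual content is to select indices $m_1 < m_2 < \cdots$ so that the consecutive secant slopes
$$
s_k := \frac{a_{m_{k+1}} - a_{m_k}}{m_{k+1} - m_k}
$$
form a nondecreasing sequence. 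Since $\{a_m\}$ is decreasing, each $s_k$ is automatically $\le 0$, so nonincreasingness of the interpolant comes for free, and only convexity, i.e.\ monotonicity of the slopes, must be engineered.

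First I would dispose of the degenerate case in which $\{a_m\}$ is eventually equal to $L$: there one takes the $m_k$ in the constant tail and sets $g \equiv L$. In the remaining case $a_m > L$, hence $b_m > 0$, for every $m$. The observation driving the selection is that for any already chosen index $m_k$,
$$
\lim_{m \to \infty} \frac{a_m - a_{m_k}}{m - m_k} = 0,
$$
because the numerator tends to the fixed negative number $L - a_{m_k}$ while the denominator tends to $+\infty$; moreover the quotient is negative for large $m$. Thus, given the previous slope $s_{k-1} < 0$, every sufficiently large index $m$ produces a secant slope from $m_k$ lying in $(s_{k-1}, 0)$. I would therefore build the subsequence inductively: start with $m_1 = 1$ and choose $m_2$ with $a_{m_2} < a_{m_1}$ (so $s_1 < 0$), and at each step pick $m_{k+1} > m_k$ large enough that $s_{k-1} \le s_k < 0$. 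This yields slopes that are nondecreasing, negative, and bounded above by $0$.

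With the subsequence in hand I would define $g$ as the piecewise linear interpolant through the points $(m_k, a_{m_k})$ on $[m_1, \infty)$, extended to $(-\infty, m_1]$ by the single affine piece of slope $s_1$. Because $m_k \to \infty$, the intervals $[m_k, m_{k+1}]$ exhaust $[m_1, \infty)$, so $g$ is a finite, real-valued function on all of $\textbf{R}$; the interpolation gives $g(m_k) = a_{m_k}$; and the successive slopes of $g$ from left to right are $s_1, s_1, s_2, s_3, \dots$, which is nondecreasing, so $g$ is convex, while all of them are $\le 0$, so $g$ is nonincreasing.

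The main obstacle is conceptual rather than computational: recognizing that convexity of a nonincreasing interpolant is exactly a monotone-slope condition, and that the convergence of $\{a_m\}$ is precisely what guarantees this condition can be met along a subsequence, via the vanishing of long secant slopes. Once this is seen, the induction and the verification of convexity, monotonicity, and the interpolation property are routine. One must only take minor care that the chosen $s_k$ stay strictly below $0$, so that the inductive step always has room to maneuver, which the strict inequality $a_{m_k} > L$ supplies.
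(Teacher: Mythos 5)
Your proof is correct, and it shares the paper's overall skeleton: both arguments inductively select a subsequence so that the consecutive secant slopes are negative and nondecreasing, and then take the resulting piecewise linear convex function --- the paper realizes it as $g=\sup_k f_k$ of the secant lines, you as the interpolant extended affinely to the left of $m_1$; where the two are both defined they are the same function. The genuine difference lies in how the inductive choice of the next index is justified. The paper picks $m_{k+2}=[x_k]+1$ just past the point $x_k$ where the current secant line $f_k$ crosses zero, so that nonnegativity gives $a_{m_{k+2}}\ge 0\ge f_k(m_{k+2})$, which is precisely the slope inequality \eqref{rano}. You instead send the candidate index to infinity and invoke convergence: the secant slope $\frac{a_m-a_{m_k}}{m-m_k}$ tends to $0$ from below, so it eventually lands in $[s_{k-1},0)$. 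Your mechanism buys a slightly more general statement, since nonnegativity of $\{a_m\}$ is never used --- only that the sequence is decreasing and convergent (hence bounded) --- whereas the paper's zero-crossing device is tied to the lower bound $0$ (it could be repaired for a general limit $L$ by crossing the level $L$ instead). A further small merit of your version is the explicit treatment of the eventually constant case: the paper's construction tacitly needs the secant lines to have strictly negative slope (otherwise $x_k$ does not exist), which is automatic only if ``decreasing'' is read strictly.
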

\begin{proof}
Let $a_{m_1}:= a_1$, $a_{m_2}:=a_2.$ 
Let  $f_1 :\text{R} \rightarrow \text{R}$ be defined as
$$
f_1(x):= a_{m_1}+\frac{x-1}{m_2-1}(a_{m_2}-a_{m_1}).
$$
Let $x_2$ be such that $f_1(x_2)=0.$ We have $x_2>m_2.$
By taking $m_{3}:= [x_2]+1$ where $[x]$ is the largest integer not greater than $x.$ We define $f_2 :\text{R} \rightarrow \text{R}$ as
$$
f_2(x):= a_{m_2}+\frac{x-m_2}{m_3-m_2}(a_{m_3}-a_{m_2}).
$$
Suppose that we have already defined $f_1,\dots,f_k$, $k\ge2.$ We define $f_{k+1}$ in the following way. Let  $x_k$ be such that $f_k(x_k)=0.$ We have $x_k>m_k.$ By taking $m_{k+2}=[x_k]+1$ we define
$f_{k+1} :\text{R} \rightarrow \text{R}$ as
$$
f_{k+1}(x):= a_{m_{k+1}}+\frac{x-m_{k+1}}{m_{k+2}-m_{k+1}}(a_{m_{k+2}}-a_{m_{k+1}}).
$$
We define $g:\textbf{R} \rightarrow \textbf{R}$ by formula  $g(x)=\sup\limits_k f_k(x).$ Clearly function $g$ is convex on $\textbf{R}.$ Let us show that 
\begin{equation}
\label{nierownosci-konstrukcja1}
\begin{array}{ll}
f_{k+1}(x) \ge f_k (x) & \mbox{ for } x\ge m_{k+1}\\
f_{k+1}(x) \le f_k (x) & \mbox{ for } x\le m_{k+1}.
\end{array}
\end{equation}
By construction of $m_{k+1}$ we have $a_{m_{k+2}}= f_k(m_{k+2})$ and
$$
a_{m_{k+2}}= a_{m_k} + \frac{{m_{k+2}}-{m_{k}}}{{m_{k+1}}-m_k}(a_{m_{k+1}}-a_{m_{k}})\ge a_{m_k} + \frac{{m_{k+2}}-{m_{k+1}}}{{m_{k+1}}-m_k}(a_{m_{k+1}}-a_{m_{k}})
$$
next from monotonicity of $\{a_m\}$ we get
$$
\frac{a_{m_{k+1}}-a_{m_{k+2}}}{m_{k+2}-m_k}\le \frac{a_{m_{k}}-a_{m_{k+2}}}{m_{k+2}-m_k}\le \frac{a_{m_{k}}-a_{m_{k+1}}}{m_{k+1}-m_k}
$$
and
\begin{equation}
\label{rano}
\frac{a_{m_{k}}-a_{m_{k+1}}}{m_{k+1}-m_k}\ge \frac{a_{m_{k+1}}-a_{m_{k+2}}}{m_{k+2}-m_{k+1}}
.\end{equation}
To get  (\ref{nierownosci-konstrukcja1}) we multiply above inequality by $m_{k+1}-x.$ 
Let us assume that $m_{k+1}\ge x$ then from (\ref{rano})
$$
\begin{array}{ll}
\frac{a_{m_{k}}-a_{m_{k+1}}}{m_{k+1}-m_k}(m_{k+1}-x)+\frac{a_{m_{k+2}}-a_{m_{k+1}}}{m_{k+2}-m_{k+1}}(m_{k+1}-x)&\ge 0\\
(a_{m_{k}}-a_{m_{k+1}})\left(  1 -\frac{x-m_k}{m_{k+1}-m_k}  \right)+\frac{a_{m_{k+2}}-a_{m_{k+1}}}{m_{k+2}-m_{k+1}}(m_{k+1}-x)&\ge 0\\
\end{array}
$$
we get $f_k(x)  \ge f_{k+1}(x).$ Analogous we can prove first inequality in (\ref{nierownosci-konstrukcja1}) for $x\ge m_{k+1}.$ 

By (\ref{nierownosci-konstrukcja1}), function $g$ is decreasing and $g(m_k)=a_{m_k}.$
\end{proof}

In Proposition \ref{konstrukcja1} we constructed function $g$ when $\{a_m\} $ is nonnegative, decreasing and $\{t_k\}$, $t_k=m_k$, $k\in \text{N}$ is a subsequence of integers. Now we consider the case when $\{a_m\}$ is arbitrary and $\{t_k\}$ is a decreasing sequence of reals.

\begin{lemma}
\label{lemat1}
Let  $\{a_{m}\},\{t_m\}\subset\textbf{R}$ be sequences with $\{t_m\}$ decreasing. 
If
\begin{equation}
\label{nierwonoscwaznaa}
\frac{a_{m+1}-a_{m}}{t_{m+1}-t_{m}} \ge \frac{a_{m+2}-a_{m+1}}{t_{m+2}-t_{m+1}} \ \ \ \mbox{ for } m\in\textbf{N},
\end{equation}
 there exists a convex function $g:\textbf{R}\rightarrow \textbf{R}$ such that 
$$
 g(t_{m})=a_{m}.
 $$
\end{lemma}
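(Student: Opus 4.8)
The plan is to interpolate the data $(t_m,a_m)$ by the upper envelope of the secant lines joining consecutive points, exactly as in Proposition~\ref{konstrukcja1}, the difference being that here the discrete convexity \eqref{nierwonoscwaznaa} is assumed outright rather than derived. Concretely, I would set
$$
s_m:=\frac{a_{m+1}-a_m}{t_{m+1}-t_m},\qquad \ell_m(x):=a_m+s_m(x-t_m),
$$
so that $\ell_m$ is the affine function with $\ell_m(t_m)=a_m$ and $\ell_m(t_{m+1})=a_{m+1}$, and then define $g(x):=\sup_m\ell_m(x)$. Being a supremum of affine functions, $g$ is automatically convex, so the entire content of the lemma is that $g$ reproduces the data, $g(t_j)=a_j$, and that $g$ is finite (real-valued) on all of $\textbf{R}$.

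For the interpolation identity I would first note that \eqref{nierwonoscwaznaa} says precisely that the slopes are non-increasing, $s_m\ge s_{m+1}$. Since $\ell_j(t_j)=a_j$ we already have $g(t_j)\ge a_j$, so it suffices to show $\ell_m(t_j)\le a_j$ for every $m$. I would do this by a telescoping computation, writing each difference as a sum of consecutive increments:
$$
a_j-\ell_m(t_j)=\sum_{i=m}^{j-1}(s_i-s_m)(t_{i+1}-t_i)\ \ (m<j),\qquad a_j-\ell_m(t_j)=\sum_{i=j}^{m-1}(s_m-s_i)(t_{i+1}-t_i)\ \ (m>j).
$$
In each summand one factor is a slope difference controlled by the monotonicity $s_m\ge s_{m+1}$ and the other is $t_{i+1}-t_i<0$, so both products are nonnegative; hence $a_j-\ell_m(t_j)\ge 0$ and $g(t_j)=a_j$, which is the desired interpolation.

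Finiteness of $g$ is where I expect the real work to lie. On each node interval $[t_{j+1},t_j]$ I would exploit that $\ell_m$ is affine to express $\ell_m(x)$ as a convex combination of $\ell_m(t_{j+1})\le a_{j+1}$ and $\ell_m(t_j)\le a_j$, which bounds $g(x)$ by $\max\{a_j,a_{j+1}\}$; and for $x\ge t_1$ the bounds $s_m\le s_1$ and $\ell_m(t_1)\le a_1$ give $g(x)=\ell_1(x)<\infty$. The delicate region is $x$ below $\inf_m t_m$: if the decreasing sequence $\{t_m\}$ is bounded below, the slopes $s_m$ may tend to $-\infty$ and the envelope can blow up there, so finiteness on all of $\textbf{R}$ genuinely relies on the nodes receding (the natural reading being $t_m\to-\infty$), after which $g$ is finite everywhere and the proof concludes. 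Pinning down this last point cleanly — equivalently, checking that the piecewise-linear interpolant extends to a finite convex function at the unbounded ends — is the main obstacle, everything else reducing to the routine slope bookkeeping above.
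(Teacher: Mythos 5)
Your construction coincides with the paper's own proof: the paper uses exactly the same secant lines (its $f_m$, your $\ell_m$) and the same envelope $g=\sup_m f_m$, and your telescoping identity is just a compressed form of the paper's pairwise comparisons \eqref{nierownoscc} ($f_{m+1}\le f_m$ for $x\ge t_{m+1}$, $f_{m+1}\ge f_m$ for $x\le t_{m+1}$), which the paper chains over indices to obtain $f_m(t_j)\le a_j=f_j(t_j)$. Up to that point your argument is correct and is essentially the argument in the paper.

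Where you diverge is the finiteness discussion, and there you have put your finger on a genuine gap --- but proposed the wrong repair. You are right that $\sup_m \ell_m$ can equal $+\infty$ below $T:=\inf_m t_m$; in fact the lemma as stated is false: take $t_m=1/m$, $a_m=m$. Then $\{t_m\}$ is decreasing and the slopes $s_m=-m(m+1)$ are nonincreasing, so \eqref{nierwonoscwaznaa} holds, yet no convex $g:\textbf{R}\rightarrow\textbf{R}$ can satisfy $g(1/m)=m$, since a finite convex function on $\textbf{R}$ is locally bounded while $g(1/m)=m\rightarrow\infty$ on $[0,1]$. The paper's proof never addresses finiteness, so it shares this defect. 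However, your suggested reading of the hypothesis, $t_m\rightarrow-\infty$, cannot be the intended one: in the only application of Lemma \ref{lemat1} (Proposition \ref{propozycja22a}, feeding Theorem \ref{main}) the nodes satisfy $t_k\downarrow 0$, so under your reading the lemma would be unusable exactly where it is needed. The repair consistent with the paper is to add the hypothesis that the slopes are bounded below, i.e. $s_\infty:=\lim_m s_m>-\infty$ (the limit exists by monotonicity). Then, when $T$ is finite, $\{t_m\}$ converges and so does $\{a_m\}$ (the increments satisfy $|a_{m+1}-a_m|\le\max\{|s_1|,|s_\infty|\}\,(t_m-t_{m+1})$, which is summable), and for $x\le T$ one gets $\ell_m(x)=a_m+s_m(x-t_m)\le a_m+s_\infty(x-t_m)$, which is bounded in $m$; combined with your bounds on $(T,+\infty)$ this makes $g$ finite on all of $\textbf{R}$, completing the proof. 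This added hypothesis does hold in Proposition \ref{propozycja22a}: there $a_k=\pm t_k w_k$ with $w_k>0$ monotone and convergent, and a one-line computation gives $s_k\ge w_{k+1}>0$ in the nonincreasing cases and $s_k\ge -w_{k+1}\ge-\sup_j w_j>-\infty$ in the increasing cases, so the slopes are indeed bounded below there.
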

\begin{proof}
For each $m\in\textbf{N}$,
let $f_m:\textbf{R}\rightarrow \textbf{R}$ be defined as
$$f_m(x):=
a_{m}+\frac{x-t_m}{t_{m+1}-t_{m}}(a_{m+1}-a_{m}) \ \ \mbox{ for } x\in \textbf{R}.
$$
We show that for $m\in\textbf{N}$ we have
\begin{equation}
\label{nierownoscc}
\begin{array}{ll}
{f}_{m+1}(x) \le {f}_m (x) & \mbox{ for } x\ge t_{m+1}\\
{f}_{m+1}(x) \ge {f}_m (x) & \mbox{ for } x\le t_{m+1}.
\end{array}
\end{equation}
If $x\ge t_{m+1}$ then multiplying both sides of \eqref{nierwonoscwaznaa} by $t_{m+1}-x$ we get
$$
(a_{m+1}-a_{m})\left(1- \frac{x-t_m}{t_{m+1}-t_{m}}\right)+ \frac{x-t_{m+1}}{t_{m+2}-t_{m+1}}(a_{m+2}-a_{m+1})\le0
$$
which is equivalent to $f_{m+1}(x)\le f_m(x).$
Analogously if $x\le t_{m+1}$  we get $f_{m+1}(x)\ge f_m(x).$

We show that the function $g:\textbf{R} \rightarrow \textbf{R}$ defined as
 $$g(r)=\sup\limits_m f_m(r)$$satisfies the requirements  of the Lemma \ref{lemat1} on  $\textbf{R}.$ Indeed, $g$ is convex as  supremum of convex functions. By the first inequality of (\ref{nierownoscc}) and the monotonicity of $\{t_m\}$ we get 
 $$
 f_{m+k}(t_m) \le f_m(t_m) = a_m\ \text{ for } k\ge 1
 .$$
 By the second  inequality of (\ref{nierownoscc}) we get 
 $$
 f_{m-k}(t_m) \le f_m(t_m) \ \text{ for } k=1,\dots, m-1.
 $$
   
This gives that  $g(t_{m})=a_{m}.$ 
\end{proof}

Observe that, we have the following property:
\begin{equation}
\label{55}
g(x)=f_{m}(x)\ \ \ \text{for  }\ t_{m+1}\le x\le t_{m}.
\end{equation}
Indeed, by the first inequality of (\ref{nierownoscc}) and the monotonicity of $\{t_m\}$ we get 
$$
f_m(x) \ge f_{m-1}(x) \ge f_{m-2} (x) \ge \dots f_1(x).
$$
By the second  inequality of (\ref{nierownoscc}) and from the fact that $x\ge t_{m+k}, k\ge 1$ we get 
$$
f_{m+k}(x) \le f_m(x) \ \ \mbox{ for  } k\ge1.
$$

We apply the above lemma to construct a convex function $g$ starting with any  nonnegative converging sequence $\{z_m\}$ with positive limit.

\begin{proposition}
	\label{propozycja22a}
	Let $\{z_m\}\subset \text{R}$ be  a  converging sequence  of nonnegative reals with 
    limit  $z>0.$
	
	There exist a subsequence $\{z_{m_k}\}\subset \{z_m\}$, 
  a sequence $\{t_k\}\subset \textbf{R}$, $t_k\downarrow 0$ and a convex function $g:\textbf{R}\rightarrow \textbf{R},$
	such that
	$$
	g(t_{k})=a_{k},
	$$
	where
	\begin{description}
		\item $a_{k}:=t_{k} z_{m_k}$ if $\{z_{m_k}\}$ is nonincreasing and $0<z<1$,
		\item $a_{k}:=t_{k} z_{m_k}\frac{1}{q}$, $q>z$, if $\{z_{m_k}\}$ is nonincreasing and $z>1$,
		\item  $a_{k}:=-t_{k} z_{m_k}$, if $\{z_{m_k}\}$ is increasing and $z>1$,
		\item  $a_{k}:=-t_{k} z_{m_k}\frac{1}{q}$,  $q<z$, if $\{z_{m_k}\}$ is increasing and $0<z<1$.
	\end{description}
\end{proposition}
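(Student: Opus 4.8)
The plan is to reduce everything to Lemma~\ref{lemat1} by rewriting the prescribed values in the uniform form $a_k = c_k t_k$ and then choosing the nodes $\{t_k\}$ recursively so that the discrete second-order condition \eqref{nierwonoscwaznaa} holds. First I would observe that in each of the four cases the ratio $c_k := a_k/t_k$ is a \emph{nonincreasing} sequence: in the two nonincreasing cases $c_k$ equals $z_{m_k}$ or $z_{m_k}/q$, and in the two increasing cases $c_k$ equals $-z_{m_k}$ or $-z_{m_k}/q$, so monotonicity of $\{z_{m_k}\}$ makes $\{c_k\}$ nonincreasing in all four situations. (The sign and the factor $1/q$ only serve to place $\lim_k c_k$ inside $(0,1)$ or below $-1$, which is irrelevant for the present existence statement.) Since every real sequence admits a monotone subsequence, and a nonincreasing sequence either takes finitely many values---hence has a constant subsequence---or takes infinitely many---hence has a strictly decreasing subsequence, I may first pass to a subsequence of $\{z_m\}$ for which $\{c_k\}$ is either constant or strictly decreasing. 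In the constant case $a_k = c\,t_k$ for a fixed $c$, so the affine map $g(x):=cx$ already satisfies $g(t_k)=a_k$ for any $t_k\downarrow 0$, and we are done.

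It remains to treat the case where $\{c_k\}$ is strictly decreasing. Here I would build $\{t_k\}$ by recursion: pick $t_1>t_2>0$ arbitrarily and, having fixed $t_1,\dots,t_{k+1}$, set $\sigma_k := \dfrac{a_{k+1}-a_k}{t_{k+1}-t_k}=\dfrac{c_{k+1}t_{k+1}-c_k t_k}{t_{k+1}-t_k}$ and choose $t_{k+2}\in(0,t_{k+1})$. Clearing the negative denominator $t_{k+2}-t_{k+1}$ and using $a_{k+2}=c_{k+2}t_{k+2}$, $a_{k+1}=c_{k+1}t_{k+1}$, one checks that the requirement $\sigma_{k+1}\le\sigma_k$ of \eqref{nierwonoscwaznaa} is equivalent to the linear constraint $t_{k+2}\,(\sigma_k-c_{k+2})\le t_{k+1}\,(\sigma_k-c_{k+1})$. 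The key identity is $\sigma_k-c_{k+1}=\dfrac{t_k(c_{k+1}-c_k)}{t_{k+1}-t_k}$, whose right-hand side is \emph{strictly positive} because $c_{k+1}<c_k$ and $t_{k+1}<t_k$; together with $c_{k+2}<c_{k+1}\le\sigma_k$ this shows both sides of the constraint are positive, so it holds for every sufficiently small $t_{k+2}$, the admissible bound $t_{k+1}(\sigma_k-c_{k+1})/(\sigma_k-c_{k+2})$ not exceeding $t_{k+1}$. Imposing in addition $t_{k+2}\le t_{k+1}/2$ forces $t_k\downarrow 0$. With \eqref{nierwonoscwaznaa} secured for every $k$, Lemma~\ref{lemat1} delivers the convex $g$ with $g(t_k)=a_k$.

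The step I expect to be the main obstacle is exactly the feasibility of this recursion. If one assumed only that $\{c_k\}$ is nonincreasing, then a plateau $c_k=c_{k+1}$ followed by a strict drop $c_{k+2}<c_{k+1}$ would force $\sigma_k=c_{k+1}$ and render the constraint $t_{k+2}(\sigma_k-c_{k+2})\le 0$ unsolvable for positive $t_{k+2}$; indeed the points $(t_k,c_kt_k)$ could then fail to be convex for \emph{every} admissible choice of nodes. This is why the preliminary reduction to a strictly decreasing (or constant) subsequence is essential rather than cosmetic, and it is the only delicate point: the remaining work is the routine sign bookkeeping in the identity for $\sigma_k-c_{k+1}$ and the verification that the four defining formulas all produce a nonincreasing $\{c_k\}$.
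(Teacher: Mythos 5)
Your proof is correct, and it reaches the conclusion by a genuinely different route than the paper, so a comparison is worthwhile. Both arguments funnel through Lemma~\ref{lemat1}, i.e., both reduce the problem to producing nodes $t_k\downarrow 0$ and values $a_k=c_kt_k$ satisfying the slope condition \eqref{nierwonoscwaznaa}, and both make the same preliminary reduction of the ratio sequence $\{c_k\}$ to a constant or strictly decreasing one (the paper does this as a terse ``without loss of generality'' at the start of each case; you do it explicitly, and your observation that a plateau followed by a strict drop makes convex interpolation impossible for \emph{every} choice of nodes is precisely why that reduction cannot be skipped). The difference is where the freedom is spent. The paper rigidly couples the nodes to the sequence, taking $t_m:=(z_m)^m$, $a_m:=(z_m)^{m+1}$ in Case 1a (with $1/q$-rescaled variants in the other cases, which is where the hypotheses $0<z<1$, $z>1$ and the parameter $q$ actually enter), and then secures \eqref{nierwonoscwaznaa} by recursively \emph{thinning the index sequence}: $m_{k+1}$ is chosen so large that $t_{m_{k+1}}$ falls below an explicit constant $C$ built from the two preceding terms. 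You instead fix the (monotone-refined) subsequence once and for all and recursively \emph{shrink the nodes} $t_k$ themselves; this is legitimate because the proposition as stated constrains $\{t_k\}$ only by $t_k\downarrow 0$, and your feasibility check is right: $\sigma_k-c_{k+1}=t_k(c_{k+1}-c_k)/(t_{k+1}-t_k)>0$ under strict decrease of $\{c_k\}$, so the admissible bound $t_{k+1}(\sigma_k-c_{k+1})/(\sigma_k-c_{k+2})$ is positive and below $t_{k+1}$, and adding $t_{k+2}\le t_{k+1}/2$ forces $t_k\downarrow 0$. What each approach buys: yours is shorter, uniform across the four cases, and never uses the value of $z$ or the parameter $q$; the paper's yields the explicit formulas $t_k=(z_{m_k})^{m_k}$ and their variants, which are quoted verbatim in the statement of Proposition~\ref{propozycja22} and inside the proof of Theorem~\ref{main}, so those later results lean on the paper's particular construction rather than on the bare statement proved here. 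That said, what Theorem~\ref{main} really needs is only $t_k\downarrow 0$ together with the ratio property $a_k/t_k=\pm y^*(y_{m_k})$ (possibly times $1/q$) along a subsequence, and subsequences of nontrivial weak Cauchy sequences remain nontrivial weak Cauchy by Proposition~\ref{podciag}; so your construction would serve the application equally well, at the cost of restating Proposition~\ref{propozycja22} without the explicit power formulas.
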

\begin{proof}
We can assume that	$z_{m}>0$ for $m\in\textbf{N}$.
	Two cases should be considered:
	\begin{enumerate}
		\item $\{z_{m}\}$ contains an infinite  subsequence $\{z_{m_{k}}\}\subset\{z_{m}\}$,
		$$
		\ \ \ \ \,   \{z_{m_{k}}\}  \mbox{ is nonincreasing, i.e. } z_{m_{k+1}}\le z_{m_{k}},\ k\in\textbf{N},
		$$
		\item $\{z_{m}\}$ contains an infinite subsequence $\{z_{m_{k}}\}\subset\{z_{m}\}$,
		$$
		\{z_{m_{k}}\} \  \mbox{is increasing, i.e. } z_{m_{k+1}}> z_{m_{k}},\ k\in\textbf{N}.
		$$
	\end{enumerate}
	To prove the assertion we show that in both cases  condition \eqref{nierwonoscwaznaa} of Lemma \ref{lemat1} is satisfied.
	
	\noindent
	Case 1.  Without loss of generality we can assume that $z_{m_{k}}=z_{m}$. Moreover, we can assume that
	$\{z_{m}\}$ is decreasing (in case when $\{z_{m}\}$ is constant the existence of the function $g$ satisfying the requirements of the proposition follows trivially from Lemma \ref{lemat1}).
	
	 \noindent
	 Case 1a. Consider first the case when $0<z<1$. By eliminating eventually a finite number of
	  $z_m$ we can assume that  $0<z_m<1$  for $m\in\textbf{N}$.
	
	Let $t_{m}:=(z_{m})^{m}$ and $a_{m}:=(z_{m})^{m+1}$ for $m\in\textbf{N}$.   Both  sequences
	are decreasing.
	Let $m_{1}:=1$, $m_{2}:=2$. Since $\{z_{m}\}$ and $\{a_{m}\}$ are decreasing,
	the number $C$ defined below is positive, i.e.
	$$
	C:=\frac{t_{1}t_{2}}{a_{2}-a_{1}}(z_{2}-z_{1})>0.
	$$
	Since $t_{m}\rightarrow 0$,
	there exists $K\ge m_{2}$ such that
	$$
	t_{m}<C\ \ \mbox{for}\ \  m>K.
	$$
	Put $m_{3}:=K+1$. We have
	$$
	t_{m_{3}}\begin{array}[t]{l}
	<\frac{t_{1}t_{2}}{a_{2}-a_{1}}(z_{2}-z_{1})
	=\frac{t_{1}a_{2}-t_{2}a_{1}+t_{2}a_{2}-t_{2}a_{2}}{a_{2}-a_{1}}
	=\frac{t_{2}(a_{2}-a_{1})-a_{2}(t_{2}-t_{1})}
	{a_{2}-a_{1}}\\
	=t_{2}-\frac{a_{2}(t_{2}-t_{1})}{a_{2}-a_{1}}
	<t_{2}-\frac{a_{2}(t_{2}-t_{1})}{a_{2}-a_{1}}+
	\frac{a_{m_{3}}(t_{2}-t_{1})}{a_{2}-a_{1}}.
	\end{array}
	$$
	The last inequality follows from the fact that $\frac{a_{m_{3}}(t_{2}-t_{1})}{a_{2}-a_{1}}>0$. Thus,
	$$
	t_{m_{3}}-t_{2}<\frac{(a_{m_{3}}-a_{2})(t_{2}-t_{1})}{a_{2}-a_{1}}
	$$
	which  proves that
	$$
	\frac{a_{2}-a_{1}}{t_{2}-t_{1}}\ge\frac{a_{m_{3}}-a_{2}}{t_{m_{3}}-t_{2}}.
	$$
	Assume that we have already defined $m_{1},m_{2},...,m_{k}$, $k\ge 3$. 
	By induction with respect to $k$, starting from
	$m_{k-1}$ and $m_{k}$  we choose $m_{k+1}$ be repeating the reasoning above with $m_{k-1}$ and 
	$m_{k}$ instead $m_{1}$ and $m_{2}$, respectively, and with $K\ge m_{k}$. In this way we choose
	$m_{k+1}$ and we  prove condition \eqref{nierwonoscwaznaa} of Lemma \ref{lemat1}.
	
	\noindent
	Case 1b. When $z>1$ it is enough to take $t_m:=\left( \frac{z_m}{q} \right)^m$ and $a_{m}:=t_{m} z_{m}\frac{1}{q}$ for arbitrary $q>z$ for $m\in\textbf{N}$. Again both sequences are decreasing. Hence, we can repeat
	the above reasoning with these sequences and get the existence of the function $g$ satisfying the requirements.
	\vspace{0.1cm}
	
	\noindent
	Case 2.  As previously, without loss of generality, we can assume that $z_{m_{k}}=z_{m}$. 
	
	\noindent
	Case 2a. Consider the case  $z>1$. By eliminating eventually a finite number of $z_m$ we 
	can assume that $z_m>1$ for $m\in\textbf{N}$.

	Let $t_{m}:=\frac{1}{(z_{m})^{m}}$ and $a_{m}:=-t_{m}z_{m}=-\frac{1}{(z_{m})^{m-1}}$.  
	
	Let $m_{1}:=1$, $m_{2}:=2$. Since $\{t_{m}\}$ is decreasing and $\{z_{m}\}$ is increasing,
	the number $C$ defined below is negative, i.e.
	$$
	C:=\frac{t_{1}t_{2}}{t_{2}-t_{1}}(z_{2}-z_{1})<0.
	$$
	Since $a_{m}\rightarrow 0$
	there exists $K\ge m_{2}$ such that
	$$
	a_{m}>C\ \ \mbox{for}\ \  m>K.
	$$
	Put $m_{3}:=K+1$. We have
	$$
	a_{m_{3}}\begin{array}[t]{l}
	\ge\frac{t_{1}t_{2}}{t_{2}-t_{1}}(z_{2}-z_{1})
	=\frac{t_{2}a_{1}-t_{1}a_{2}+t_{2}a_{2}-t_{2}a_{2}}{t_{2}-t_{1}}
	=\frac{a_{2}(t_{2}-t_{1})-t_{2}(a_{2}-a_{1})}
	{t_{2}-t_{1}}\\
	=a_{2}-\frac{t_{2}(a_{2}-a_{1})}{t_{2}-t_{1}}
	>a_{2}-\frac{t_{2}(a_{2}-a_{1})}{t_{2}-t_{1}}+
	\frac{t_{m_{3}}(a_{2}-a_{1})}{t_{2}-t_{1}}.
	\end{array}
	$$
	The last inequality follows from the fact that
	 $\frac{t_{m_{3}}(a_{2}-a_{1})}{t_{2}-t_{1}}<0$. Thus,
	$$
	a_{m_{3}}-a_{2}>\frac{(t_{m_{3}}-t_{2})(a_{2}-a_{1})}{t_{2}-t_{1}}.
	$$
	As previously, this  proves that
	$$
	\frac{a_{2}-a_{1}}{t_{2}-t_{1}}\ge\frac{a_{m_{3}}-a_{2}}{t_{m_{3}}-t_{2}}.
	$$
	Assume that we have already defined $m_{1},m_{2},...,m_{k}$, $k\ge 3$.
	By induction with respect to $k$, starting from
	$m_{k-1}$ and $m_{k}$  we choose $m_{k+1}$ be repeating the reasoning above with $m_{k-1}$ and 
	$m_{k}$ instead $m_{1}$ and $m_{2}$, respectively, and with $K\ge m_{k}$. In this way we choose
	$m_{k+1}$ and we  prove condition \eqref{nierwonoscwaznaa} of Lemma \ref{lemat1}.
	
      \noindent
      Case 2b. When $0<z<1$ it is enough to take $t_m:=\frac{1}{\left( \frac{z_m}{q} \right)^m}$ for some $q<z$ and $a_m:=-t_m z_{m}\frac{1}{q}.$ We can repeat
	the above reasoning with these sequences and get the existence of the function $g$ satisfying the requirements.

	In consequence, in all cases,  by Lemma \ref{lemat1}, we get the function $g:\textbf{R} \rightarrow \textbf{R}$ defined  by formula  $g(r)=\sup\limits_k f_k(r)$ such that  $g(t_{k})=a_{k},$ where $f_k$, $k\in\textbf{N}$, are as in the proof of Lemma \ref{lemat1}.
	\end{proof}
\vspace{0.3cm}

Proposition \ref{propozycja22a} allows us to formulate  the following corollary  used in the proof of the main result (Theorem \ref{main}).
\vspace{0.3cm}

\begin{proposition}
\label{propozycja22}
Let $Y$ be a normed space and let
 $\{y_m\}\subset Y$ be  such that the sequence  $\{\bar{y}^*(y_m)\}$ converges to  
 $z\in\text{R}\setminus\{0\}$ for 
a certain $\bar{y}^*\in Y^*\setminus\{0\}$. 
 
 There exist a subsequence $\{y_{m_k}\}\subset \{y_m\}$, a functional
 $y^*\in Y^*$ and a convex function $g:\textbf{R}\rightarrow \textbf{R},$
 such that
$$
 g(t_{k})=a_{k},
 $$
 where
 \begin{enumerate}
 \item $t_{k}:=({y}^{*}(y_{m_k}))^{m_k}$, $a_{k}:=t_{k} y^*(y_{m_k}),$ if $\{y^{*}(y_{m_{k}})\}$ is nonicreasing and $0<z<1$,
 \item $t_{k}:=(\frac{{y}^{*}(y_{m_k})}{q})^{m_k}$, $a_{k}:=t_{k} y^*(y_{m_k})\frac{1}{q}$, for some
 $q>z$, if $\{y^{*}(y_{m_{k}})\}$ is nonicreasing and $z>1$,
 \item  $t_{k}:=(\frac{1}{y^{*}(y_{m_k})})^{m_k}$, $a_{k}:=-t_{k} y^*(y_{m_k})$, if $\{y^{*}(y_{m_{k}})\}$ is increasing and  $z>1$,
 \item  $t_{k}:=(\frac{q}{y^{*}(y_{m_k})})^{m_k}$, $a_{k}:=-t_{k} y^*(y_{m_k})\frac{1}{q}$
 for some $q<z$, if $\{y^{*}(y_{m_{k}})\}$ is increasing and  $0<z<1.$
 \end{enumerate}
   \end{proposition}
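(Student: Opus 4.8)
The plan is to read Proposition \ref{propozycja22} as the vector-valued counterpart of the scalar Proposition \ref{propozycja22a}, obtained by composing the sequence $\{y_m\}$ with a single fixed functional. First I would reduce to a positive limit. Since $\bar{y}^*(y_m)\to z\neq 0$, all but finitely many terms $\bar{y}^*(y_m)$ have the sign of $z$; discarding finitely many indices and replacing $\bar{y}^*$ by $-\bar{y}^*$ when $z<0$, I may assume $z>0$ and set $y^*:=\bar{y}^*$. Then $z_m:=y^*(y_m)$ is a convergent sequence of positive reals with limit $z>0$, which is precisely the hypothesis of Proposition \ref{propozycja22a}, the role of the abstract $\{z_m\}$ there being played by $\{y^*(y_m)\}$ here.

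Applying Proposition \ref{propozycja22a} to $\{z_m\}$ yields a monotone subsequence $\{z_{m_k}\}=\{y^*(y_{m_k})\}$, a sequence $t_k\downarrow 0$, and a convex function $g:\textbf{R}\to\textbf{R}$ with $g(t_k)=a_k$. It then remains only to check that the data $t_k,a_k$ supplied by that proposition coincide with the explicit formulas of the four cases of Proposition \ref{propozycja22} after the substitution $z_{m_k}=y^*(y_{m_k})$. Indeed, Case~1a of Proposition \ref{propozycja22a} (with $\{z_{m_k}\}$ nonincreasing, $0<z<1$) gives $t_k=(z_{m_k})^{m_k}$ and $a_k=(z_{m_k})^{m_k+1}=t_k\,y^*(y_{m_k})$, which is item~(1); Case~1b gives $t_k=(z_{m_k}/q)^{m_k}$ and $a_k=t_k z_{m_k}\tfrac{1}{q}$ for $q>z$, which is item~(2); Case~2a gives $t_k=1/(z_{m_k})^{m_k}$ and $a_k=-t_k z_{m_k}$, which is item~(3); and Case~2b gives $t_k=1/(z_{m_k}/q)^{m_k}=(q/z_{m_k})^{m_k}$ and $a_k=-t_k z_{m_k}\tfrac{1}{q}$ for $q<z$, which is item~(4). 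Since monotonicity of $\{z_{m_k}\}$ is the same as monotonicity of $\{y^*(y_{m_k})\}$, the four case distinctions match exactly, and the convexity of $g$ together with the interpolation identity $g(t_k)=a_k$ is inherited from Lemma \ref{lemat1} via Proposition \ref{propozycja22a}.

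The points requiring care are bookkeeping rather than analysis. The first is the index convention: in Proposition \ref{propozycja22a} the exponent in $t_m=(z_m)^m$ is the running index of the (relabelled) monotone sequence, and one must track it through the retained subsequence so that the exponent written in $t_k$ is precisely $m_k$, as stated in Proposition \ref{propozycja22}. The second is the ordering of the preliminary reductions — passing to a positive limit and extracting a monotone subsequence — which are harmless but must precede the application of Proposition \ref{propozycja22a} so that its hypotheses (nonnegativity, a monotone subsequence, and the verification of \eqref{nierwonoscwaznaa}) are in force. Since no new estimate is needed beyond those already established in Proposition \ref{propozycja22a}, I expect no substantive obstacle; the statement is essentially a transcription of the scalar result into the language of $y^*(y_m)$, and the only mild boundary case $|z|=1$, falling outside all four items, is simply not asserted.
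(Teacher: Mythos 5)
Your proof is correct and is essentially the paper's own argument: the paper likewise reduces to Proposition \ref{propozycja22a} by replacing $\bar{y}^*$ with a scalar multiple $y^*=c\,\bar{y}^*$, $c\neq 0$, so that $\{y^*(y_m)\}$ becomes a convergent sequence of nonnegative reals with positive limit, and then reads off the four cases exactly as you do. The only difference is that the paper permits an arbitrary scale factor $c$ rather than just $c=\pm 1$, which also disposes of your boundary case $|z|=1$ (rescale so that the new limit differs from $1$) instead of setting it aside as unasserted.
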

   
 \begin{proof}
	The proof follows directly from  Proposition \ref{propozycja22a}. Indeed,
	 let us observe that we can always find some $y^*\in Y^*	$, $y^*=c \bar{y}^*$ for a certain $c\neq 0 $ such that $\{y^*(y_m)\}$ is a converging sequence of nonnegative reals
	 with the limit point $z>0$.  
 	 \end{proof}

\section{Main results}
Let $X$  be a vector space and  $Y$ be a real Banach space.
Let   $F: A \rightarrow Y$ be a mapping defined on a subset $A$ of $X$.
\vspace{0.4cm}

\begin{definition}
	\label{pochodna}
	We say that the mapping $F: A \rightarrow Y$ is  directionally differentiable at
	$x_0 \in A $ in the direction $h\neq 0$ such that $x_0+th \in A$ for all  $t$ sufficiently small
	 if  the limit
	
	\begin{equation}
	\label{pochodna-1}
	F'(x_0;h): = \lim_{t\downarrow 0 } \frac{F(x_0+th)- F(x_0)}{t}
	\end{equation}
	exists. The element $F'(x_0;h)$ is called the directional derivative of $F$ at $x_{0}$ in the direction $h.$
\end{definition}

Let $A\subset X$ be a convex subset of $X.$ Let   $F: A \rightarrow Y$ be a $K$-convex mapping on $A$, where $K\subset Y$ is a closed convex
 cone in $Y$.
\vspace{0.4cm}
 By elementary calculations \cite{valadier} one can prove that for any $K$-convex mappings $F$ on $A$, for all $x_0\in A$, $h\in X$, $x_0+th\in A$ for all $t$ sufficiently small, the difference quotient $q(t):=\frac{F(x_0+th)-f(x_0)}{t}$ is nondecreasing
 as a function of $t$. 

Our aim is to prove the following theorem.

\begin{theorem}
	\label{main}
Let $X$ be a  real linear vector space and let $Y$ be a Banach space.
If for every closed convex  cone $K \subset Y$ and every $K$-convex mapping $F: A \rightarrow Y$, $A\subset X$ is a convex subset of $X$, $0\in A,$  the directional derivative $F'(0;h)$ exists for any $h\in X,$ $h\neq 0$ such that $th\in A$ for all $t$ sufficiently small, then $Y$ is weakly sequentially complete. 
\end{theorem}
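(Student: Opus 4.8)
The plan is to argue by contraposition. Suppose $Y$ is not weakly sequentially complete; I will exhibit a closed convex cone $K$, a convex set $A$ with $0\in A$, and a $K$-convex mapping $F:A\rightarrow Y$ whose directional derivative at $0$ fails to exist in some direction. By assumption there is a nontrivial weak Cauchy sequence $\{y_n\}\subset Y$. Since $\{y_n\}$ does not converge weakly, the functional $y^*\mapsto\lim_n y^*(y_n)$ on $Y^*$ is not the zero functional, so there is $\bar y^*\in Y^*\setminus\{0\}$ with $\lim_n\bar y^*(y_n)=z\neq 0$. This is precisely the hypothesis of Proposition \ref{propozycja22}, which yields a subsequence $\{y_{m_k}\}$, a functional $y^*=c\bar y^*$ (with $c\neq 0$) normalized so that $\{y^*(y_{m_k})\}$ is positive and convergent, a sequence $t_k\downarrow 0$, reals $a_k$, and a convex function $g:\textbf{R}\rightarrow\textbf{R}$ with $g(t_k)=a_k$. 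In each of the four cases one checks that $a_k/t_k=\gamma\,y^*(y_{m_k})$ for a fixed constant $\gamma\neq 0$.

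For the cone I take the closed half-space $K:=\{y\in Y:y^*(y)\ge 0\}$, whose dual cone is the ray $K^*=\{\lambda y^*:\lambda\ge 0\}$. By Lemma \ref{lemma-convex} a mapping into $Y$ is $K$-convex if and only if the single scalar function $y^*\circ F$ is convex. Setting $X:=\textbf{R}$, $A:=[0,t_1]$, and $h:=1$, I define $F(0):=0$, $F(t_k):=\gamma\,t_k\,y_{m_k}$, and extend $F$ affinely on each interval $[t_{k+1},t_k]$. Then $y^*(F(t_k))=\gamma\,t_k\,y^*(y_{m_k})=a_k=g(t_k)$, and since $F$ is affine on $[t_{k+1},t_k]$ the composition $y^*\circ F$ there is the affine interpolation of $a_{k+1}$ and $a_k$, which by property \eqref{55} equals $g$ on that interval. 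Hence $y^*\circ F=g$ on all of $(0,t_1]$; since a weak Cauchy sequence is norm-bounded, $\|y_{m_k}\|$ is bounded and $F(t)\rightarrow 0=F(0)$, so $y^*\circ F=g$ on the whole of $[0,t_1]$ and is convex. Thus $F$ is $K$-convex.

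It remains to inspect the difference quotient at $0$ along the points $t_k$, namely $\frac{F(t_k)-F(0)}{t_k}=\gamma\,y_{m_k}$. By Proposition \ref{podciag} the subsequence $\{y_{m_k}\}$ is again nontrivial weak Cauchy, and multiplication by the nonzero scalar $\gamma$ preserves this property, so $\{\gamma\,y_{m_k}\}$ does not converge weakly and hence does not converge in norm. Therefore the limit $\lim_{t\downarrow 0}\frac{F(th)-F(0)}{t}$ cannot exist, since its existence would force convergence of the quotient along the sequence $t_k\downarrow 0$. Consequently $F'(0;1)$ does not exist, contradicting the hypothesis of the theorem; hence $Y$ must be weakly sequentially complete.

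The step I expect to be the main obstacle is the verification of $K$-convexity: one must ensure that the vector-valued affine interpolation defining $F$ is carried by $y^*$ exactly onto the scalar convex function $g$ of Proposition \ref{propozycja22}. This is what dictates the half-space choice of $K$, so that $K$-convexity reduces to a single scalar convexity condition, and it is precisely where property \eqref{55} is indispensable.
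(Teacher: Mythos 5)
Your proposal is correct and follows essentially the same route as the paper's own proof: the same half-space cone $K$ whose dual is the ray generated by $y^*$, the same piecewise-affine interpolation with $F(t_k)=\gamma\,t_k y_{m_k}$ built from Proposition \ref{propozycja22} and property \eqref{55}, the same reduction of $K$-convexity to convexity of the single scalar function $y^*\circ F$ via Lemma \ref{lemma-convex}, and the same conclusion that the difference quotients $\gamma\,y_{m_k}$ cannot converge by Proposition \ref{podciag}. The only cosmetic deviations are that you unify the paper's four cases through one constant $\gamma$ (a tidy simplification), and that you set $X:=\textbf{R}$ --- since the theorem fixes an arbitrary vector space $X$, you should formally transport the construction to the ray $A=\{rh:\ r\ge 0\}\subset X$ via the injection $r\mapsto rh$, exactly as the paper does, but this repair is immediate.
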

\begin{proof}
By contradiction, assume that $Y$ is not weakly
sequentially complete.  This means that there exists a nontrivial weak Cauchy sequence $\{y_m\}$ in $Y$, i.e. 
$$
\mbox{for all } y^* \in Y^* \mbox{\ \ the sequence } y^*(y_m) \ \mbox{converges} 
$$
and $\{y_m\}$ is not weakly convergent in $Y$. 
Basing ourselves of the existence of the above sequence we construct a closed convex cone $K\subset Y$ and a $K$-convex mapping  $F: A \rightarrow Y$ such that for a
 given direction $0\neq h\in X$
$$
F'(0;h)\ \ \mbox{does not exist.} 
$$
Let us observe first that we can always find a functional $y^{*}\in Y^{*}\setminus\{0\}$ such that
\begin{equation}
\label{zbiega}
 y^{*}(y_{m})\rightarrow z\in\textbf{R}\setminus\{0\},
 \end{equation}
since otherwise $\{y_{m}\}$  would weakly converge to zero. Let us note that without losing generality
we can assume that $z>0$. Consequently,  by neglecting eventually a finite number of elements we can also assume that
$y^{*}(y_{m})>0$, $m\in\textbf{N}$.

Let $K^{*}\subset Y^{*}$ be defined as
$$
K^{*}:=\cup_{\lambda\ge 0}\lambda y^{*}.
$$
The cone $K^{*}$ is a half-line emanating from $0$ in the direction of $y^{*}$. This is a closed pointed convex cone in $Y^{*}.$

Let
\begin{equation}
\label{eq-cone}
K:=\{y\in Y\ |\  z^{*}(y)\ge 0\ \mbox{for all}\ z^{*}\in K^{*}\}.
\end{equation}
By (\ref{zbiega}), $\{y_{m}\}_{m\in\textbf{N}}\subset K$.
 The cone  $K$ is closed and convex.
By Proposition \ref{propozycja22}, there exist a sequence $t_{k}\downarrow 0 $,
a  subsequence $\{y_{m_k}\}\subset\{y_{m}\}$ and  a convex function $g: \textbf{R}\rightarrow \textbf{R}$ such that 
$g(t_k)=a_k$, where
\begin{enumerate}
 \item $t_{k}:=({y}^{*}(y_{m_k}))^{m_k}$, $a_{k}:=t_{k} y^*(y_{m_k}),$ if $\{y^{*}(y_{m_{k}})\}$ is nonicreasing and $0<z<1$,
 \item $t_{k}:=(\frac{{y}^{*}(y_{m_k})}{q})^{m_k}$, $a_{k}:=t_{k} y^*(y_{m_k})\frac{1}{q}$, where
 $q>z$, if $\{y^{*}(y_{m_{k}})\}$ is nonicreasing and $z>1$,
 \item  $t_{k}:=(\frac{1}{y^{*}(y_{m_k})})^{m_k}$, $a_{k}:=-t_{k} y^*(y_{m_k})$, if $\{y^{*}(y_{m_{k}})\}$ is increasing and  $z>1$,
 \item  $t_{k}:=(\frac{q}{y^{*}(y_{m_k})})^{m_k}$, $a_{k}:=-t_{k} y^*(y_{m_k})\frac{1}{q}$
 for some $q<z$, if $\{y^{*}(y_{m_{k}})\}$ is increasing and  $0<z<1.$
 \end{enumerate}
 Now we construct $K$-convex mappings $F:A \rightarrow Y$ satisfying the requirements of the theorem, where $A:=\{x \in X:\ x=rh,\ r\ge0\}$ for some $h\in X$, $h\neq0$. We limit our attention to the case 1. and the case 3.. The  case 2. and the case 4. require only minor changes. 

Case 1. 
We construct a $K$-convex mapping $F:A\rightarrow Y$ such that 
$$ 
F(t_{k}h)=a_k=t_{k}y_{m_k},\ \ k\in \textbf{N},
$$
where $\{t_k\}$ and $\{y_{m_{k}}\}$ are such that 
$\{t_{k}\}$ and $\{y^{*}(y_{m_{k}})\}$ are as in the case 1. Let $F_k: A \rightarrow Y,$ $k\in \textbf{N},$ be defined as follows.
Let $F_1: A \rightarrow Y$ be defined as
$$F_1(x):=\begin{cases}
y_{m_1}t_{1}+\frac{r-t_{1}}{t_{2}-t_{1}}(y_{m_{2}}t_{2}-y_{m_1}t_{1})&  \ t_{2}< r\\
0 & \  r <t_{2}.\end{cases}$$

For any $k\in \textbf{N}$, $k\ge2$ and any $x=rh$, $r>0$ we define $F_k: A \rightarrow Y$ as follows
$$F_k(x):=\begin{cases}
y_{m_k}t_{k}+\frac{r-t_{k}}{t_{k+1}-t_{k}}(y_{m_{k+1}}t_{k+1}-y_{m_k}t_{k})&  \ t_{k+1}< r\le t_{k}\\
0 & \  r \notin (t_{k+1}, t_{k}].\end{cases}$$

We start by showing that the mapping  $F: A \rightarrow Y$ defined as $F(x):=\sum\limits_{i=1}^\infty F_i(x)$
 is well defined. To this aim it is enough to observe that for any $x\in A$ with $x=rh$, where $t_{k+1}<r\le t_k$,  for any $N\in \textbf{N}$ we have
 $$
 \sum\limits_{i=1}^N F_i(x)=\begin{cases}
 F_k(x) & N \ge k\\
 0 & N<k.
 \end{cases}
 $$
 Consequently, for $t_{k+1}<r\le t_k$, we have
\begin{equation}
\label{funkcja}
F(x)=\sum\limits_{i=1}^{\infty} F_i(x)=\lim\limits_{N\rightarrow\infty}\sum\limits_{i=1}^N F_i(x)=F_{k}(x).
\end{equation}
It is easy to see that for $r>t_2$ we have $F(x)=F_1(x).$ Moreover, $F(0)=0.$
We show that the mapping  $F$ is $K$-convex on $A$ with respect to cone $K$ defined by (\ref{eq-cone}).
To this aim we use Lemma \ref{lemma-convex},

Precisely, we start by showing that for  $x\in A$,  $x=rh$, $r>0$ we have 
$$
y^*(F(x))=g(r),
$$
where function $g$ is as in Proposition \ref{propozycja22}.
Indeed, if $r\in (t_{{k+1}}, t_{k}]$ for some $k \ge 2$,
then by (\ref{55}) and (\ref{funkcja}), 
$$y^*(F(x))=y^*(F_k(rh)) =y^*\left(y_{m_k}t_{k}+\frac{r-t_{k}}{t_{k+1}-t_{k}}(y_{m_{k+1}}t_{k+1}-y_{m_k}t_{k})  \right) =f_{k}(r)=g(r).$$
If $r>t_2$, then 
$$
y^*(F(x))=y^*(F_1(rh)) =y^*\left(y_{m_1}t_{1}+\frac{r-t_{1}}{t_{2}-t_{1}}(y_{m_{2}}t_{2}-y_{m_1}t_{1})  \right) =f_{1}(r)=g(r).
$$

 
Now, take any  $z^*\in K^*.$ By the definition of $K^*$, $z^*=\beta y^*$ for some $\beta \ge 0$ and by convexity of $g$
we get
\begin{equation}
\label{02-10}
\begin{array}{l}
z^{*}(F\left((\lambda a_{1}+(1-\lambda)a_{2})h\right))
=\beta y^*(F\left((\lambda a_{1}+(1-\lambda)a_{2})h\right))\\=\beta g(\lambda a_1+(1-\lambda )a_2) \le \beta\lambda  g(a_1) + \beta(1-\lambda) g(a_2)=\\
=\lambda z^*( F(a_1 h)) + (1-\lambda )z^*( F(a_2 h )).
\end{array}
\end{equation}
By Lemma  \ref{lemma-convex}, this proves that $F$ is $K$-convex on $A$.

Case 2. We take $t_{k}:=(\frac{{y}^{*}(y_{m_k})}{q})^{m_k}$ and $a_k:= t_k y^*(y_{m_k})\frac{1}{q},$ where $q>z.$ With this choice $\{t_k\}$, $\{a_k\}$, $k \in \textbf{N},$ by repeating the reasoning from Case 1. we get the required mapping $F.$

Case 3. We construct a $K$-convex mapping $F:A \rightarrow Y$ such that 
$$ 
F(t_{k}h)=a_k=-t_{k}y_{m_k},\ \ k\in \textbf{N},
$$
where $\{t_k\}$ and $\{y_{m_k}\}$ are such that $\{t_k\}$ and $\{y^*(y_{m_k})\}$ are as in the case 3. 


Let $F_k: A \rightarrow Y$, $k\in \textbf{N},$ be defined as follows.
Let  $F_1: A\rightarrow Y$ be defined as
$$F_1(x):=\begin{cases}
-y_{m_1}t_{1}-\frac{r-t_{1}}{t_{2}-t_{1}}(y_{m_{2}}t_{2}-y_{m_1}t_{1})&  \ t_{2}< r\\
0 & \  r <t_{2}.\end{cases}$$

For any $k\in \textbf{N}$, $k\ge2$   and any $x\in A$, $x=rh$, $r>0$ we define $F_k: A \rightarrow Y$ as follows
$$F_k(x):=\begin{cases}
-y_{m_k}t_{k}-\frac{r-t_{k}}{t_{k+1}-t_{k}}(y_{m_{k+1}}t_{k+1}-y_{m_k}t_{k})&  \ t_{k+1}< r\le t_{k}\\
0 & \  r \notin (t_{k+1}, t_{k}].\end{cases}$$

Now the mapping $F:A \rightarrow Y$ is defined as in Case 1. by  formula (\ref{funkcja}). As previously, for $r>t_2$ we have $F(x)=F_1(x)$ and $F(0)=0.$ For any $z^*\in K^*,$ the convexity of function $z^*(F)$  is proved  by the same arguments as in Case 1. in the proof of  (\ref{02-10}).

Case 4. We have $t_{k}:=(\frac{q}{y^{*}(y_{m_k})})^{m_k}$ and $a_{k}:=-t_{k} y^*(y_{m_k})\frac{1}{q}$
 for some $q<z.$ With this choice $\{t_k\}$, $\{a_k\}$, $k \in \textbf{N},$ by repeating the reasoning from Case 3. we get the required mapping $F.$

In all the cases, the directional derivative of $F$ at zero in the direction $h\in X$  equals
$$
F'(0;h)=\lim_{t\downarrow 0}\frac{F(th)}{t}.
$$
Summing up the considerations above we get the following formulas: 
$$
\begin{array}{ll}
\mbox{ Case 1. \ \ } F'(0;h)=\lim\limits_{k\rightarrow +\infty}\frac{F(t_{k}h)}{t_{k}}=\lim\limits_{k\rightarrow +\infty}y_{m_k},\\ 
\mbox{ Case 2. \ \ } F'(0;h)=\lim\limits_{k\rightarrow +\infty}\frac{F(t_{k}h)}{t_{k}}=\lim\limits_{k\rightarrow +\infty}\frac{1}{q}y_{m_k}\ \ \mbox{ for some }q>z.\\
\mbox{ Case 3. \ \ } F'(0;h)=\lim\limits_{k\rightarrow +\infty}\frac{F(t_{k}h)}{t_{k}}=\lim\limits_{k\rightarrow +\infty}-y_{m_k}.\\ 
\mbox{ Case 4. \ \ } F'(0;h)=\lim\limits_{k\rightarrow +\infty}\frac{F(t_{k}h)}{t_{k}}=\lim\limits_{k\rightarrow +\infty}-\frac{1}{q}y_{m_k}\ \ \mbox{ for some }q<z,\\

\end{array}
$$
for the respective sequences $t_{k}\downarrow 0$ and $\{y_{m_k}\}.$

On the other hand, by Proposition \ref{podciag}, the subsequences
 $\{y_{m_k}\}$ appearing in the above formulas are not weakly convergent. 
 
Hence, we constructed a cone $K$ and a $K$-convex mapping $F$ on $A$
which is not directionally differentiable at $0$ in the direction $h,$ which completes the proof.
\end{proof}
\section{Extension of mapping $F$}

When  $X$ is a Hilbert space  we can extend the mapping $F$ from Theorem \ref{main} to the half-space $H:=\{x \in X: \langle x, h \rangle \ge 0\},$ where $h$ is as in Theorem \ref{main}.

\begin{proposition}
\label{kolacja}
Let $X$ be a Hilbert space and $h\in X\setminus\{0\}.$ Let set $A,$ mapping $F:A \rightarrow Y,$ and cone $K$ be defined as in the proof of Theorem \ref{main}. Then there exists a convex extension $\bar{F}: H \rightarrow Y$ of  $F$ to the half-space $H=\{x\in X\ :\ \langle x, h \rangle \ge 0\}.$
\end{proposition}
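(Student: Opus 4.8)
The plan is to realise $\bar F$ as the composition of $F$ with the orthogonal projection of $X$ onto the line spanned by $h$, exploiting the fact that precomposing a $K$-convex mapping with an affine map preserves $K$-convexity. Accordingly, I would first introduce the orthogonal projection $P:X\to X$ onto the one-dimensional subspace $\textbf{R}h$, given explicitly by $P(x)=\frac{\langle x,h\rangle}{\langle h,h\rangle}\,h$. Since $X$ is a Hilbert space and $h\neq 0$, this is a well-defined bounded linear operator, and $P$ fixes the ray $A$ pointwise, because $P(rh)=rh$ for every $r\ge 0$.

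The second step is to check that $P$ carries the half-space $H$ into the ray $A$: for $x\in H$ one has $\langle x,h\rangle\ge 0$, so $P(x)=rh$ with $r=\langle x,h\rangle/\langle h,h\rangle\ge 0$, whence $P(x)\in A$. With this in hand I would define $\bar F:=F\circ P$ on $H$; it extends $F$ since for $x=rh\in A$ we have $\bar F(x)=F(P(x))=F(x)$. To verify $K$-convexity, take $x_1,x_2\in H$ and $\lambda\in[0,1]$. Linearity of $P$ gives $P(\lambda x_1+(1-\lambda)x_2)=\lambda P(x_1)+(1-\lambda)P(x_2)$, and by the previous step this point lies in the convex set $A$, so applying the $K$-convexity of $F$ on $A$ yields
$$
\bar F(\lambda x_1+(1-\lambda)x_2)=F\bigl(\lambda P(x_1)+(1-\lambda)P(x_2)\bigr)\le_K \lambda F(P(x_1))+(1-\lambda)F(P(x_2))=\lambda\bar F(x_1)+(1-\lambda)\bar F(x_2).
$$

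The only point requiring care — and the place where the Hilbert structure is genuinely used — is the containment $P(H)\subseteq A$: the defining condition $\langle x,h\rangle\ge 0$ of the half-space is exactly what guarantees that the projected coordinate $r$ is nonnegative, so that $P(x)$ lands in the ray on which $F$ was constructed. Beyond this there is no real obstacle, since the rest is the routine observation that composition with the linear map $P$ transports the $K$-convexity of $F$ from $A$ to all of $H$.
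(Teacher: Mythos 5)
Your proof is correct and follows essentially the same route as the paper, which defines $\bar F(x):=F(\langle x,h\rangle h)$ and verifies $K$-convexity using the linearity of $x\mapsto\langle x,h\rangle h$ exactly as you do with $P$. The only difference is your normalization by $\langle h,h\rangle$: this makes $\bar F$ a literal extension of $F$ for every $h\neq 0$, whereas the paper's unnormalized formula gives $\bar F(rh)=F(r\|h\|^{2}h)$ on $A$ and is therefore a genuine extension only when $\|h\|=1$ (a normalization the paper in effect imposes in the theorem that follows), so your version is in fact slightly more careful.
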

\begin{proof}
Let $h\neq 0,$ set $A$, cone $K,$ and mapping $F$ be defined as in the proof of Theorem \ref{main}. 
Let $\bar{F}: H \rightarrow Y$ be defined as follows
$$\bar{F}(x)=  F(\langle x, h \rangle h).$$
Let $x_1, x_2\in H$ and $\lambda \in [0,1].$
$K$-convexity of mapping $\bar{F}$ on $H$ follows from $K$-convexity of mapping $F$ on $A,$  
$$
\begin{array}{ll}
\bar{F}(\lambda x_1 + (1-\lambda)x_2)={F}(\langle \lambda x_1 + (1-\lambda)x_2, h \rangle h)=F(\lambda \langle x_1, h \rangle h  + (1-\lambda)\langle x_2, h \rangle h) \\
\le_K \lambda F(\langle x_1,h\rangle) + (1-\lambda) F(\langle x_2, h\rangle h)=\lambda\bar{F}(x_1) + (1-\lambda)\bar{F}(x_2).
\end{array}
$$
\end{proof}
\newpage

In view of the above Proposition we can formulate the following theorem.
\begin{theorem}
Let $X$ be a  Hilbert space and let $Y$ be a Banach space.
If for every closed convex  cone $K \subset Y,$ and every $K$-convex mapping $F: H \rightarrow Y$ on $H=\{x\in X\ :\ \langle x,h \rangle \ge 0\}$ for all $h\in X$, $\|h\|=1$  the directional derivative $F'(0;h)$ exists,  
 then $Y$ is weakly sequentially complete. 

\end{theorem}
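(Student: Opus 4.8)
The plan is to obtain this theorem as a direct consequence of Theorem \ref{main} together with the extension construction of Proposition \ref{kolacja}, arguing by contradiction. Suppose $Y$ is not weakly sequentially complete. As in the proof of Theorem \ref{main}, there then exists a nontrivial weak Cauchy sequence $\{y_m\}\subset Y$, and I may fix a functional $y^{*}\in Y^{*}\setminus\{0\}$ with $y^{*}(y_m)\to z\in\textbf{R}\setminus\{0\}$, normalized so that $z>0$ and $y^{*}(y_m)>0$. Since $X$ is a nonzero Hilbert space I fix an arbitrary unit vector $h\in X$, $\|h\|=1$, and run the construction of Theorem \ref{main} verbatim on the ray $A=\{rh\ :\ r\ge 0\}$. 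This produces a closed convex cone $K\subset Y$ and a $K$-convex mapping $F:A\to Y$ for which the directional derivative $F'(0;h)$ fails to exist, the obstruction being that the relevant subsequence $\{y_{m_k}\}$ is not weakly convergent (Proposition \ref{podciag}).

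Next I would invoke Proposition \ref{kolacja} to extend $F$ to the $K$-convex mapping $\bar{F}:H\to Y$ on the half-space $H=\{x\in X\ :\ \langle x,h\rangle\ge 0\}$ given by $\bar{F}(x)=F(\langle x,h\rangle h)$; note that $\langle x,h\rangle\ge 0$ guarantees $\langle x,h\rangle h\in A$, so $\bar{F}$ is well defined, and Proposition \ref{kolacja} supplies its $K$-convexity. The key observation, and the one point that genuinely uses $\|h\|=1$, is that the directional derivative of the extension at $0$ along $h$ coincides with that of $F$. Indeed, for $t\ge 0$ one has $\langle th,h\rangle=t\|h\|^{2}=t$, hence $\bar{F}(th)=F(\langle th,h\rangle h)=F(th)$ and $\bar{F}(0)=F(0)=0$, so that
$$
\bar{F}'(0;h)=\lim_{t\downarrow 0}\frac{\bar{F}(th)-\bar{F}(0)}{t}=\lim_{t\downarrow 0}\frac{F(th)}{t}=F'(0;h).
$$
Since the right-hand limit does not exist, neither does $\bar{F}'(0;h)$.

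This exhibits a single closed convex cone $K$, a single unit direction $h$, and a single $K$-convex mapping $\bar{F}$ on the half-space $H=\{x:\langle x,h\rangle\ge 0\}$ whose directional derivative at $0$ in the direction $h$ fails to exist, which directly contradicts the universally quantified hypothesis of the theorem. Hence $Y$ must be weakly sequentially complete. I expect the only real subtlety to be logical bookkeeping rather than analysis: one must note that the half-space hypothesis, being quantified over all $K$, all unit $h$, and all admissible $F$, is refuted by a single counterexample, and one must be careful to apply the reduction identity $\bar{F}(th)=F(th)$ only for a normalized direction, since for a non-unit $h$ the factor $\|h\|^{2}$ would rescale the argument and the clean equality $\bar{F}'(0;h)=F'(0;h)$ would require a corresponding adjustment.
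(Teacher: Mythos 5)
Your proposal is correct and follows essentially the same route as the paper: run the construction from Theorem \ref{main} on the ray $A=\{rh:\ r\ge 0\}$ with $\|h\|=1$, extend to the half-space $H$ via Proposition \ref{kolacja} by $\bar{F}(x)=F(\langle x,h\rangle h)$, and observe that $\bar{F}(th)=F(th)$ for $t\ge 0$, so $\bar{F}'(0;h)$ fails to exist along with $F'(0;h)$, contradicting the hypothesis. Your version merely makes explicit the contradiction bookkeeping and the role of the normalization $\|h\|=1$, which the paper leaves implicit.
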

\begin{proof}
Without loss of generality, in Theorem \ref{main}, we can define $K$-convex mapping $F$ on $A=\{x\in X: x=rh,\ r\ge0\},$ where direction $h\in X$ is such that $\|h\|=1.$
From
Proposition \ref{kolacja} we can extend the $K$-convex mapping $F$  on $A$ to the $K$-convex mapping $\bar{F}$ on $H.$ Since, $\bar{F}(0)=0$ the directional derivative of $\bar{F}$ at $0$ in the direction $h$ is equal to
$$
\lim\limits_{t \downarrow 0} \frac{\bar{F}(\langle th,h\rangle h)}{t}=\lim\limits_{t \downarrow 0} \frac{\bar{F}(t\|h\|^2 h)}{t}=\lim\limits_{t \downarrow 0}\frac{F(t h)}{t}.
$$

The directional derivative of $F$ at zero in the direction $h\in X$ does not exist which completes the proof.
\end{proof}

\section{Comments}
In \cite{valadier} directional derivatives of cone convex mappings are defined via the concept of infimum of the set $\left\{\frac{F(x_0+th)-F(x_0)}{t}:\ t>0\right\}$. Let $K$ be a pointed cone.  An element $a$ is the infimum  of $A$ if $a$ is a lower bound, i.e. $a\le_K x\  \forall x \in A$ and, for any lower bound $a'$ of $A$, we have $a'\le_K a$ (c.f. \cite{bourbaki} Ch.7, par.1, nr 8, \cite{as} Ch.2, p.18).
 Without additional assumptions about cone $K$ one cannot expect the equality
\begin{equation}
\label{rownosccc}
\lim\limits_{t\downarrow 0 } \frac{F(x_0+th)-F(x_0)}{t}=\inf\left\{\frac{F(x_0+th)-F(x_0)}{t}:\ t>0\right\}
.\end{equation}

In \cite{as} we can find the following proposition.
\begin{proposition}
Let $(X,\tau)$ be a Hausdorff topological vector space ordered by the closed convex pointed cone $K$. If the net $(x_i)_{i\in I}\subset X$ is nonincreasing and convergent to $x\in X,$ then $\{x_i\ : \ i\in I \}$ is bounded below and $x=\inf\{x_i\ :\ i\in I\}.$
\end{proposition}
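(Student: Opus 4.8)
The plan is to verify the two defining properties of the infimum separately, in both cases exploiting that $K$ is closed so that limits of nets lying in $K$ remain in $K$; this closedness is the only genuinely topological ingredient, while the order structure itself rests on convexity and pointedness of $K$.

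First I would show that the limit $x$ is a lower bound of $\{x_i : i \in I\}$. Fix an index $i_0 \in I$. Since the net is nonincreasing, for every $j \ge i_0$ we have $x_j \le_K x_{i_0}$, that is, $x_{i_0} - x_j \in K$. The restriction of the net to the cofinal directed set $\{j \in I : j \ge i_0\}$ still converges to $x$, and because subtraction from the fixed element $x_{i_0}$ is continuous in the topological vector space $X$, the net $(x_{i_0} - x_j)_{j \ge i_0}$ converges to $x_{i_0} - x$. As every term lies in the closed set $K$, the limit satisfies $x_{i_0} - x \in K$, i.e. $x \le_K x_{i_0}$. Since $i_0$ was arbitrary, $x$ is a lower bound, which in particular shows that $\{x_i : i \in I\}$ is bounded below.

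Next I would show that $x$ dominates every lower bound, establishing that $x = \inf\{x_i : i \in I\}$. Let $x'$ be any lower bound, so that $x_i - x' \in K$ for all $i \in I$. Then the net $(x_i - x')_{i \in I}$ lies entirely in $K$ and converges to $x - x'$ by continuity of translation. Closedness of $K$ again yields $x - x' \in K$, i.e. $x' \le_K x$. Together with the first part this is precisely the definition of $x$ being the infimum recalled in this section, and pointedness of $K$ guarantees that the infimum so obtained is unique.

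The computations are elementary; the only real subtlety — and the point at which the hypotheses are genuinely used — is the passage to the limit along a net inside $K$. Closedness of $K$ is what makes this limit passage legitimate, convexity supplies transitivity of $\le_K$ so that the notions of lower bound and infimum make sense, and pointedness ensures antisymmetry so that the infimum is well defined. The main thing to handle with care is the restriction to tails $\{j \ge i_0\}$ in the first step, since the relation $x_{i_0} - x_j \in K$ holds only for indices beyond $i_0$; here one must invoke that a tail of a convergent net converges to the same limit, whereas no such restriction is needed in the second step, where the inclusion $x_i - x' \in K$ already holds for all $i$.
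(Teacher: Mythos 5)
Your proof is correct. Note, however, that the paper offers no proof of this proposition at all --- it is quoted verbatim from the cited monograph of Khan, Tammer and Z\v alinescu \cite{as} --- so there is no in-paper argument to compare yours against; what you give (the tail net $(x_{i_0}-x_j)_{j\ge i_0}$ stays in the closed cone $K$, so its limit $x_{i_0}-x$ does too, and the same closedness argument applied to $(x_i - x')_{i\in I}$ handles an arbitrary lower bound $x'$) is the standard and complete argument, with each hypothesis --- closedness for the limit passage, convexity for transitivity, pointedness for uniqueness of the infimum, and Hausdorffness for uniqueness of the limit $x$ --- used exactly where it is needed.
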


As a corollary we get

\begin{proposition}If there exists $F'(x_0;h)$,  there exists $$\inf\left\{\frac{F(x_0+th)-F(x_0)}{t}:\ t>0\right\}
.$$ 
\end{proposition}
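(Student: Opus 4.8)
The plan is to realise the family $\left\{q(t):t>0\right\}$, where $q(t):=\frac{F(x_0+th)-F(x_0)}{t}$, as a monotone net in $Y$ and then invoke the preceding proposition on nonincreasing convergent nets. The starting point, already recorded in the text, is that for a $K$-convex mapping $F$ the difference quotient $q(t)$ is nondecreasing as a function of $t>0$; that is, $q(s)\le_K q(t)$ whenever $0<s\le t$.

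First I would turn the positive reals into a directed index set by reversing the usual order: put $I:=(0,\delta)$, with $\delta$ small enough that $x_0+th\in A$, and declare $t_1\preceq t_2$ in $I$ precisely when $t_1\ge t_2$ as reals. With this convention, moving upward in $I$ amounts to letting $t$ decrease to $0$, and the net $(q(t))_{t\in I}$ becomes nonincreasing in the order induced by $K$, by the monotonicity of $q$. Since $F'(x_0;h)$ exists by hypothesis, the defining limit $\lim_{t\downarrow 0}q(t)=F'(x_0;h)$ is exactly the statement that this net converges, in the norm topology of $Y$, to $F'(x_0;h)$.

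At this point I would apply the preceding proposition with $(X,\tau)$ taken to be $Y$ equipped with its norm topology and the closed convex pointed cone $K$. The proposition applies to the nonincreasing convergent net $(q(t))_{t\in I}$ and yields that $\left\{q(t):t>0\right\}$ is bounded below and that its infimum exists and equals the limit, i.e. $\inf\left\{q(t):t>0\right\}=F'(x_0;h)$. In particular the infimum exists, which is the assertion.

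The hard part is bookkeeping rather than genuine difficulty: one must verify that the reverse-ordered interval $(0,\delta)$ is a bona fide directed set and that norm convergence as $t\downarrow 0$ coincides with convergence of this net, and one must orient the monotonicity correctly so that the net is nonincreasing (so that the limit is the infimum and not a supremum). Pointedness of $K$ is what guarantees that the greatest lower bound, once it exists, is unique. Should one prefer to sidestep the net proposition, the same conclusion follows by hand: since $q(s)\le_K q(t)$ for $s\le t$ and $K$ is closed, letting $s\downarrow 0$ gives $F'(x_0;h)\le_K q(t)$ for every $t$, so $F'(x_0;h)$ is a lower bound; and for any other lower bound $a'$ one has $q(t)-a'\in K$ for all $t$, whence $F'(x_0;h)-a'\in K$ by closedness of $K$, showing that $F'(x_0;h)$ is the greatest lower bound.
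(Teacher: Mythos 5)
Your proof is correct and takes essentially the same route as the paper: the paper states this proposition as an immediate corollary of the preceding result on nonincreasing convergent nets (applied, just as you do, to the net of difference quotients $q(t)$ indexed by $t\downarrow 0$ with the reversed order), so your write-up simply makes explicit the bookkeeping the paper leaves implicit. Your alternative direct argument via closedness of $K$ is also valid, but the main line of reasoning coincides with the paper's.
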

On the other hand, in \cite{peressini} we can find the following example. Let $X=l^\infty$ and let us consider partial order generated by the cone $l^\infty_+:=\{x\in l^\infty\ :\ x^k\ge 0 \ \forall k\ge 1\}$, $l^\infty_+$ is a pointed closed convex cone. The sequence $\{x_n\}\subset l^\infty$ defined (for $n$ fixed) $x_n^k:=\begin{cases} -1 & \mbox{if }1\le k\le n\\
0 & \mbox{if }   k>n,
\end{cases}$
is nonincreasing, and $\inf\{x_n\ :\ n\ge 1\}=(-1,-1,-1,\dots).$ But $\{x_n\}$ does not converge to its infimum.

We prove the cone convexity of the mapping $F$ constructed in the  proof of Theorem  \ref{main} (and in the proof of Theorem 6.2) for the cone $K$
$$
K=\{y \in Y| \ \ z^*(y)\ge 0\ \mbox{ for all } z^* \in K^*\}.
$$
This cone is not pointed, so the concept of the infimum of the set is not defined. Let us note that the concept of directional derivative introduced in Definition \ref{pochodna} is well defined independently on whether cone $K$ is pointed or not.
\newpage

\end{document}